\author[M. Fuchs and H.-K. Hwang]{M.
Fuchs\addressmark{1}\thanks{Partially supported by MOST under the
grant MOST-104-2923-M-009-006-MY3}\ and H.-K. Hwang\addressmark{2}}
\title[Dependencies in Random Tries]{Dependence between External
Path-Length and Size in Random Tries}
\address{\addressmark{1}Department of Applied Mathematics, National Chiao Tung University, Hsinchu, 300, Taiwan\\
\addressmark{2}Institute of Statistical Science, Academia Sinica, Taipei, 115, Taiwan}
\keywords{Random tries, Pearson's correlation coefficient, asymptotic
normality, Poissonization/de-Poissonization, Mellin transform,
contraction method}
\begin{document}

\newtheorem{thm}{Theorem}
\renewcommand*{\thethm}{\Alph{thm}}
\newtheorem{cor}{Corollary}
\newtheorem{lmm}{Lemma}
\newtheorem{conj}{Conjecture}
\newtheorem{pro}{Proposition}
\renewcommand*{\thepro}{\Alph{pro}}
\newtheorem{df}{Definition}
\newtheorem{Rem}{Remark}
\newcommand{\bigcell}[2]{\begin{tabular}{@{}#1@{}}#2\end{tabular}}

\maketitle

\begin{abstract}
\paragraph{Abstract.}

We study the size and the external path length of random tries and
show that they are asymptotically independent in the asymmetric case
but strongly dependent with small periodic fluctuations in the
symmetric case. Such an unexpected behavior is in sharp contrast to
the previously known results that the internal path length is totally
positively correlated to the size and that both tend to the same
normal limit law. These two examples provide concrete instances of
bivariate normal distributions (as limit laws) whose correlation is
$0$, $1$ and periodically oscillating.

\end{abstract}

\section{Introduction}\label{intro}

Tries are one of the most fundamental tree-type data structures in
computer algorithms. Their general efficiency depends on several
shape parameters, the principal ones including the depth, the height,
the size, the internal path-length (IPL), and the external
path-length (EPL); see below for a more precise description of those
studied in this paper. While most of these measures have been
extensively investigated in the literature, we are concerned here
with the question: \emph{how does the EPL depend on the size in a
random trie?} Surprisingly, while the IPL and the size are known to
have asymptotic correlation coefficient tending to one and to have
the same normal limit law after each being properly normalized (see
\cite{FuHwZa,FuLe}), this paper aims to show that the EPL exhibits a
completely different behavior depending on the parameter of the
underlying random bits being biased or unbiased. This is a companion
paper to \cite{ChFuHwNe}.

Given a sequence of binary strings (or keys), one can construct a
(binary) trie as follows. If $n=1$, then the trie consists of a
single root-node holding the sole string; otherwise, the root is used
to direct the strings into the corresponding subtree: if the first
bit of the input string is $0$ (or $1$), then the string goes to the
left (or right) subtree; strings going to the same subtree are then
constructed recursively in the same manner but instead of splitting
according to the first bit, the second bit of each string is then
used. In this way, a binary dictionary-type tree with two types of
nodes is constructed: external nodes for storing strings and internal
nodes for splitting the strings; see Figure~\ref{fg-trie} for a trie
of seven strings.

\begin{figure}[!ht]
\begin{center}
\begin{tikzpicture}[level/.style={sibling distance = 5cm/#1,
level distance = 0.75cm}, treenode/.style = {align=center,
inner sep=0pt, text centered}, arn_n/.style = {treenode, circle,
white, draw=black,fill=black, text width=0.6em},
arn_x/.style = {rounded corners=0.2ex,
drop shadow={shadow xshift=0.3ex,shadow yshift=-0.3ex},
fill=white,treenode, rectangle, draw=black,  minimum width=3em,
minimum height=0.7em, font=\scriptsize}]
\node [arn_n] {}
	child{ node [arn_n] {}  
		child{ node [arn_x]
		{\colorbox{black}{\textcolor{white}{$00011100$}}}  
			edge from parent
			node[left,yshift=0.15cm,font=\scriptsize] {$0$}
		}
		child{ node [arn_n] {} 
		child{ node [arn_x]
		{\colorbox{black}{\textcolor{white}{$01010100$}}}
			edge from parent
			node[left,yshift=0.15cm,font=\scriptsize] {$0$}}	
			child{ node [arn_x]
			{\colorbox{black}{\textcolor{white}{$01100111$}}}
			edge from parent
			node[right,yshift=0.15cm,font=\scriptsize] {$1$}}	
			edge from parent
			node[right,yshift=0.15cm,font=\scriptsize] {$1$}
            }
		edge from parent
		node[above,font=\scriptsize] {$0$}
	}
	child{ node [arn_n] {}	
            child{ node [arn_x]
			{\colorbox{black}{\textcolor{white}{$10111010$}}} 
			edge from parent
			node[left,yshift=0.15cm,font=\scriptsize] {$0$}
            }
            child{ node [arn_n] {}
			child[]{ node [arn_n] {}
				child[]{ node [arn_n] {}
					child[sibling distance = 1.5cm]{ node [arn_x]
					{\colorbox{black}{\textcolor{white}{$11000011$}}}
					edge from parent node[left,yshift=0.10cm,
					font=\scriptsize] {$0$}} 
					child[sibling distance = 1.5cm]{ node [arn_n] {}
						child[sibling distance = 1.8cm]{ node [arn_x]
						{\colorbox{black}{\textcolor{white}
						{$11001000$}}}
						edge from parent node[left,yshift=0.00cm,
						font=\scriptsize] {$0$}}	
						child[sibling distance = 1.5cm]{ node [arn_x]
						{\colorbox{black}{\textcolor{white}
						{$11001010$}}} edge from parent node[right,
						yshift=0.00cm,font=\scriptsize] {$1$}}	
					edge from parent node[right,yshift=0.10cm,
					font=\scriptsize] {$1$}
					}
				edge from parent node[left,yshift=0.10cm,
				font=\scriptsize] {$0$}
				}
				child[missing]{ node [arn_n] {}}
				edge from parent node[left,yshift=0.10cm,
				font=\scriptsize] {$0$}
			}
			child[missing]{ node [arn_r] {}}
			edge from parent
			node[right,yshift=0.15cm,font=\scriptsize] {$1$}
            }
		edge from parent
		node[above,font=\scriptsize] {$1$}
	};
\end{tikzpicture}
\caption{\emph{A trie with $n=7$ records: the (filled) circles
represent internal nodes and rectangles holding the binary strings
are external nodes. In this example, $S_n=8$, $K_n=27$, and
$N_n=18$.}}
\label{fg-trie}
\end{center}
\end{figure}

The random trie model we consider here assumes that each of the $n$
binary keys is an infinite sequence consisting of independent
Bernoulli bits each with success probability $0<p<1$. Then the trie
constructed from this sequence is a random trie. We define three
shape parameters in a random trie of $n$ strings:
\begin{itemize}
	\item size $S_n$: the total number of internal nodes used;
	\item IPL (or node path-length, NPL) $N_n$: the sum
	      of the distances between the root to each internal node;
    \item EPL (or key path-length, KPL) $K_n$: the sum
	      of the distances between the root to each external node.
\end{itemize}
We will use mostly NPL in place of IPL, and KPL in place of EPL, the
reason being an easier comparison with the corresponding results in
random $m$-ary search trees in the companion paper \cite{ChFuHwNe};
see below for more details.

By the recursive definition, we have the following recurrence
relations
\begin{align}\label{dist-rec}
	\begin{cases}
		S_n\stackrel{d}{=}S_{B_n}+S_{n-B_n}^{*}+1,\\
		K_n\stackrel{d}{=}K_{B_n}+K_{n-B_n}^{*}+n,\\
		N_n\stackrel{d}{=}N_{B_n}+N_{n-B_n}^{*}
		+S_{B_n}+S_{n-B_n}^{*},
	\end{cases}\qquad(n\ge2),
\end{align}
where $B_n=\text{Binom}(n,p)$ and $S_0=S_1=K_0=K_1 =N_0=N_1=0$. Here
$(S_n^*), (K_n^*)$, and $(N_n^*)$ are independent copies of $(S_n),
(K_n)$ and $(N_n)$, respectively. While many stochastic properties of
these random variables are known (see \cite{FuHwZa} and the references
therein), much less attention has been paid to their correlation and
dependence structure.

The asymptotic behaviors of the moments of random variables defined on
tries typically depend on the ratio $\frac{\log p}{\log q}$ being
rational or irrational, where $q=1-p$. So we introduce, similar to
\cite{FuHwZa}, the notation
\begin{align}\label{gk}
	\mathscr{F}[g](z)
	=\begin{cases}{
	    \sum_{k\in\mathbb{Z}}g_k
		z^{-\chi_k}},&{ \text{if\ }
		\frac{\log p}{\log q}\in\mathbb{Q}};\\
		g_0,&{
		\text{if\ } \frac{\log p}{\log q}\not\in\mathbb{Q}},
	\end{cases}
\end{align}
where $g_k$ represents a sequence of coefficients and $\chi_k=
\frac{2rk\pi i}{\log p}$ when $\frac{\log p}{\log q}= \frac rl$ with
$r$ and $l$ coprime. In simpler words, $\mathscr{F}[g](z)$ is a
periodic function in the rational case, and a constant in the
irrational case. We also use $\mathscr{F}[\cdot](z)$ as a generic
symbol if the exact form of underlying sequence matters less, and in
this case each occurrence may not represent the same function.

With this notation, the asymptotics of the mean and the variance of the above three shape parameters are
summarized in the following table; see \cite{FuHwZa} and the references
therein for more information.
\begin{table}[!h]
\begin{center}
\begin{tabular}{|c||c|c|} \hline
	Shape parameters & $\frac1n(\text{mean})\sim$
	& $\frac1n(\text{variance})\sim$  \\ \hline
	Size $S_n$ & $\mathscr{F}[\cdot](n)$
	& $\mathscr{F}[g^{(1)}](n)$ \\ \hline
	NPL $N_n$ & $\frac{\mathbb{E}(S_n)}{n}\cdot \frac{\log n}{h}
	$ & $\frac{\mathbb{V}(S_n)}{n}
	\cdot \frac{(\log n)^2}{h^2}$ \\ \hline
	KPL $K_n$ & $\frac{\log n}{h}+\mathscr{F}[\cdot](n)$
	& \textcolor{red}{
	$\frac{pq\log^2\frac pq}{h^2}$}
	$\cdot\frac{\log n}{h}+\mathscr{F}[g^{(3)}](n)$
	\\ \hline \hline
	Depth $D_n$ & $\mathbb{E}(D_n) =
	\frac{\mathbb{E}(K_n)}{n}$ &
	$\mathbb{V}(D_n) =
	\frac{\mathbb{V}(K_n)}{n}+O(1)$
	\\ \hline
\end{tabular}
\end{center}
\vspace*{-.4cm}
\caption{\emph{Asymptotic patterns of the means and the variances of
the shape parameters discussed in this paper. Here
$\mathscr{F}[\cdot](n)$ differs from one occurrence to another and
$h=-p\log p-q\log q$ denotes the entropy. Expressions for $g^{(1)}_k$
and $g^{(3)}_k$ will be given below. Asymptotic normality holds for
all three random variables $S_n, N_n, K_n$.}}
\label{tb-SKN}
\end{table}

Note specially that the leading constant
\[
    \lambda=\lambda_p := \frac{pq\log^2\frac pq}{h^3}
	= \frac{(p\log^2p+q\log^2q)-h^2}{h^3}
\]
in the asymptotic approximation to $\mathbb{V}(K_n)$ equals zero when
$p=q$, implying that $\mathbb{V}(K_n)$ is not of order $n\log n$ but
of linear order in the symmetric case. \emph{This change of order can
be regarded as the source property distinguishing the dependence and
independence of $K_n$ on $S_n$.}

On the other hand, if we denote by $D_n$ the depth, which is defined
to be the distance between the root and a randomly chosen external
node (each with the same probability), then we have not only the
relation $\mathbb{E}(D_n)n = \mathbb{E}(K_n)$, but also the
asymptotic equivalent $\mathbb{V}(D_n)n \sim \mathbb{V}(K_n)$ when
$p\neq 1/2$ (or $\lambda>0$), and a central limit theorem holds; see
Devroye \cite{De98}.

From Table~\ref{tb-SKN}, we see roughly that each internal node
contributes $\frac{\log n}h$ to $N_n$, namely, that $N_n\approx
S_n\cdot \frac{\log n}h$. Indeed, it was proved in \cite{FuHwZa} that
the correlation coefficient of $S_n$ and $N_n$ satisfies
\begin{align}\label{rho-Sn-Nn}
    \rho(S_n,N_n)\sim 1\qquad(0<p<1).
\end{align}
Such a linear correlation was further strengthened in \cite{FuLe},
where it was proved that both random variables tend to the \emph{same}
normal limit law $\mathcal{N}_1$ (with zero mean and unit variance)
\[
	\biggl(\frac{S_n-\mathbb{E}(S_n)}
	{\sqrt{\mathbb{V}(S_n)}},\frac{N_n-\mathbb{E}(N_n)}
	{\sqrt{\mathbb{V}(N_n)}}\biggr)
	\stackrel{d}{\longrightarrow}
	(\mathcal{N}_1,\mathcal{N}_1),
\]
where $\stackrel{d}{\longrightarrow}$ denotes convergence in
distribution. In terms of the bivariate normal law $\mathcal{N}_2$
(see Tong \cite{To90}), we can write
\[
	\biggl(\frac{S_n-\mathbb{E}(S_n)}
	{\sqrt{\mathbb{V}(S_n)}},\frac{N_n-\mathbb{E}(N_n)}
	{\sqrt{\mathbb{V}(N_n)}}\biggr)^{\intercal}
	\stackrel{d}{\longrightarrow} \mathcal{N}_2(0,E_2),
\]
where $E_2=\scriptsize \begin{pmatrix} 1 & 1\\ 1& 1\end{pmatrix}$ is
a singular matrix and $\mathbf{A}^\intercal$ denotes the transpose of
matrix $\mathbf{A}$.

We show that the correlation and dependence of $K_n$ on $S_n$ are
drastically different. We start with their correlation coefficient.

\begin{thm}\label{main-thm-1}
The covariance of the number of internal nodes and KPL in a random
trie of $n$ strings satisfies
\[
    \mathrm{Cov}(S_n,K_n) \sim n \mathscr{F}[g^{(2)}](n),
\]
where $g^{(2)}_k$ is given in Proposition \ref{fc-cov} below, and
their correlation coefficient satisfies
\begin{align}\label{rho-Sn-Kn}
	\rho(S_n,K_n)\sim
	\begin{cases}
		0,&\text{if}\ p\ne \frac12\\
		F(n),&\text{if}\ p=\frac12.
	\end{cases}
\end{align}
Here $F(n)=\frac{\mathscr{F}[g^{(2)}](n)}
{\sqrt{\mathscr{F}[g^{(1)}](n) \mathscr{F}[g^{(3)}](n)}}$ is a
periodic function with average value $0.927\cdots$.
\end{thm}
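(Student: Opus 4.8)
The plan is to compute the asymptotics of $\mathrm{Cov}(S_n,K_n)$ via the Poissonization/de-Poissonization machinery together with the Mellin transform, and then combine it with the already-known variance asymptotics from Table~\ref{tb-SKN} to extract the correlation coefficient. First I would Poissonize: write $\widetilde{f}(z)=e^{-z}\sum_{n\ge0}\mathbb{E}(S_nK_n)\frac{z^n}{n!}$ and similarly the Poissonized versions of $\mathbb{E}(S_n)$, $\mathbb{E}(K_n)$, so that the mixed moment $\mathbb{E}(S_nK_n)$ satisfies a functional equation inherited from the recurrences in \eqref{dist-rec}. Multiplying the three coupled distributional recurrences and taking expectations, using the independence of the starred copies, gives $\mathbb{E}(S_nK_n)$ in terms of lower-index mixed moments plus additive "toll" terms coming from the $+1$ in $S_n$ and the $+n$ in $K_n$; in the Poisson model the binomial splitting becomes the clean substitution $z\mapsto pz$ and $z\mapsto qz$. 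The resulting Poissonized functional equation for the centered mixed moment (i.e. for the Poissonized covariance $\widetilde C(z)$) is of the standard trie type $\widetilde C(z)=\widetilde C(pz)+\widetilde C(qz)+\widetilde T(z)$, whose solution is given by the Mellin transform of the toll $\widetilde T(z)$ divided by $1-p^{-s}-q^{-s}$.

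The second step is the Mellin analysis. The toll term $\widetilde T(z)$ will be a combination of the Poissonized means $\widetilde{\mathbb{E}}(S)(z)$ and $\widetilde{\mathbb{E}}(K)(z)$ and their dilations, all of which have well-understood Mellin transforms with poles governing the $n$ and $n\log n$ scales. Shifting the contour to the left, the dominant singularity of $\frac{\widetilde T^*(s)}{1-p^{-s}-q^{-s}}$ at $s=-1$ produces the leading term $n\mathscr{F}[g^{(2)}](n)$; in the rational case $\frac{\log p}{\log q}\in\mathbb{Q}$ the family of poles $s=-1+\chi_k$ along the vertical line $\Re(s)=-1$ yields the periodic fluctuation $\mathscr{F}[g^{(2)}](n)$, and reading off the residues at $s=-1+\chi_k$ gives the explicit Fourier coefficients $g^{(2)}_k$ asserted in Proposition~\ref{fc-cov}. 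The key point to check is that there is \emph{no} $n\log n$ term: the would-be double pole at $s=-1$ must degenerate to a simple pole, which is where the symmetric/asymmetric dichotomy must already be visible in the covariance (paralleling the fact that $\lambda_p=0$ iff $p=q$). One then de-Poissonizes, using the analytic de-Poissonization lemmas (the toll and the Poissonized moments are all of polynomial growth in cones), to transfer $\widetilde C(z)\sim z\mathscr{F}[g^{(2)}](z)$ to $\mathrm{Cov}(S_n,K_n)\sim n\mathscr{F}[g^{(2)}](n)$; the de-Poissonization correction is of lower order and does not affect the leading term.

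The final step is purely arithmetic: by definition
\[
	\rho(S_n,K_n)=\frac{\mathrm{Cov}(S_n,K_n)}
	{\sqrt{\mathbb{V}(S_n)\,\mathbb{V}(K_n)}},
\]
and by Table~\ref{tb-SKN} we have $\mathbb{V}(S_n)\sim n\mathscr{F}[g^{(1)}](n)$ while $\mathbb{V}(K_n)\sim \lambda_p h\, n\log n+n\mathscr{F}[g^{(3)}](n)$. When $p\ne\frac12$ we have $\lambda_p>0$, so the denominator is of order $\sqrt{n\cdot n\log n}=n\sqrt{\log n}$ while the numerator is only $O(n)$, giving $\rho(S_n,K_n)\to0$. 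When $p=\frac12$ the $n\log n$ term disappears, both $\mathbb{V}(K_n)$ and $\mathrm{Cov}(S_n,K_n)$ are of linear order with periodic coefficients, and the ratio converges to the periodic function $F(n)=\mathscr{F}[g^{(2)}](n)/\sqrt{\mathscr{F}[g^{(1)}](n)\mathscr{F}[g^{(3)}](n)}$; its mean value $0.927\cdots$ is obtained by numerically evaluating the $k=0$ Fourier coefficients (the constants $g^{(1)}_0$, $g^{(2)}_0$, $g^{(3)}_0$), since the average of a ratio of such periodic functions is dominated by the constant terms while the oscillating harmonics contribute only a small correction.

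The main obstacle I anticipate is the bookkeeping in the Mellin step: setting up the toll $\widetilde T(z)$ correctly requires carefully tracking how the additive parts ($+1$ and $+n$) interact with the product $S_nK_n$ — one gets cross terms like $n\widetilde{\mathbb{E}}(S)(pz)$, $\widetilde{\mathbb{E}}(K)(pz)$, etc., evaluated at dilated arguments — and then verifying the crucial cancellation that kills the $n\log n$ contribution to the covariance when $p=q$ (and shows it is present but irrelevant when $p\ne q$ because it is swamped by $\mathbb{V}(K_n)$). A secondary technical point is justifying de-Poissonization uniformly enough to preserve the periodic fluctuations rather than merely the leading constant.
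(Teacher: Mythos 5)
Your overall pipeline---Poissonization of the mixed moment, a functional equation for a Poissonized covariance, Mellin analysis of the toll, de-Poissonization, and then the elementary comparison with the known variances---is the paper's route, and your final step deducing \eqref{rho-Sn-Kn} from a covariance of exact order $n$ is correct (apart from the harmless slip $\mathbb{V}(K_n)\sim\lambda_p n\log n$, not $\lambda_p h\,n\log n$). But there is a genuine error in the middle step. You assert that the $n\log n$ contribution to the covariance is ``present but irrelevant when $p\ne q$ because it is swamped by $\mathbb{V}(K_n)$''. This contradicts the statement you are proving ($\mathrm{Cov}(S_n,K_n)\sim n\mathscr{F}[g^{(2)}](n)$ is claimed for \emph{all} $p$), and it would also destroy the conclusion: since $\sqrt{\mathbb{V}(S_n)\mathbb{V}(K_n)}\asymp n\sqrt{\log n}$, a covariance of order $n\log n$ would force $\rho(S_n,K_n)\asymp\sqrt{\log n}$, which is impossible as $|\rho|\le 1$. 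In fact the covariance is linear for every $p$, and the mechanism killing the would-be $n\log n$ term is not a residue coincidence tied to $p=q$; the symmetric/asymmetric dichotomy enters \emph{only} through $\mathbb{V}(K_n)$. The cancellation is built into the correct definition of the Poissonized covariance, $\tilde{C}(z)=\tilde{f}_{1,1}(z)-\tilde{f}_{1,0}(z)\tilde{f}_{0,1}(z)-z\tilde{f}'_{1,0}(z)\tilde{f}'_{0,1}(z)$: without the derivative correction (your ``centered mixed moment'' as written) the quantity is of order $n\log n$ for all $p$, because $n\tilde{f}'_{1,0}(n)\tilde{f}'_{0,1}(n)\asymp n\log n$; with it, the toll in \eqref{Cz} reduces to $\tilde{h}_1(z)=pqz\bigl(\tilde{f}'_{1,0}(pz)-\tilde{f}'_{1,0}(qz)\bigr)\bigl(\tilde{f}'_{0,1}(pz)-\tilde{f}'_{0,1}(qz)\bigr)$ plus an exponentially small remainder, the logarithms cancel in the difference $\tilde{f}'_{0,1}(pz)-\tilde{f}'_{0,1}(qz)$, and hence $\tilde{h}_1(z)=O(|z|)$ for every $p$ (with $\tilde{h}_1\equiv 0$ at $p=\tfrac12$). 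This verification is absent from your plan, and without it the asserted linear order of the covariance does not follow.

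A second, smaller gap: the Mellin step is not the routine treatment of tolls ``with well-understood Mellin transforms''. Since $\tilde{h}_1$ is a \emph{product} of two fluctuating functions, $\mathscr{M}[\tilde{h}_1;s]$ is a Mellin convolution integral, and the substantial work---what the paper calls the most complicated part---is to continue it analytically to $\Re(s)=-1$ and show it has no singularities there, via the simplification procedure of \cite{FuHwZa}; this is also where the convolution term (the sum over $j$) in \eqref{g2k} comes from, so it cannot be bypassed if you want the explicit $g^{(2)}_k$ of Proposition~\ref{fc-cov}. Your concern about de-Poissonization, by contrast, is handled by the standard admissible-function framework and is not the bottleneck.
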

The result \eqref{rho-Sn-Kn} is to be compared with \eqref{rho-Sn-Nn}
(which holds for all $p\in(0,1)$): \emph{the surprising difference
here comes not only from the (common) distinction between $p=\frac12$
and $p\ne \frac12$ but also from the (less expected) intrinsic
asymptotic nature.}

\begin{figure}[!h]
\begin{center}
\hspace*{0 cm}
\begin{tikzpicture}[]
\node[align=center] at (0,0) {
\includegraphics[height=3.8cm]{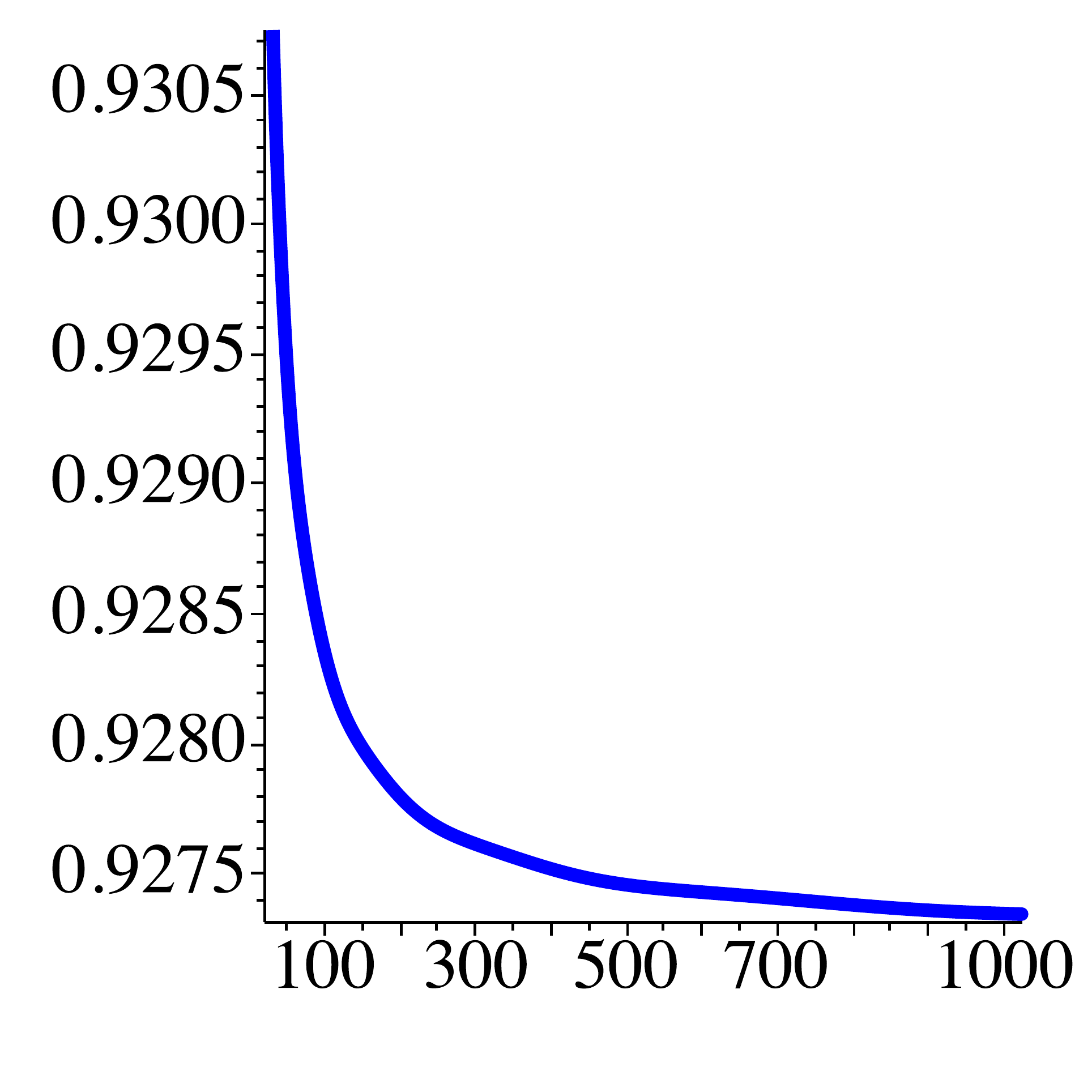}\hspace*{0 pt}
\includegraphics[height=3.8cm]{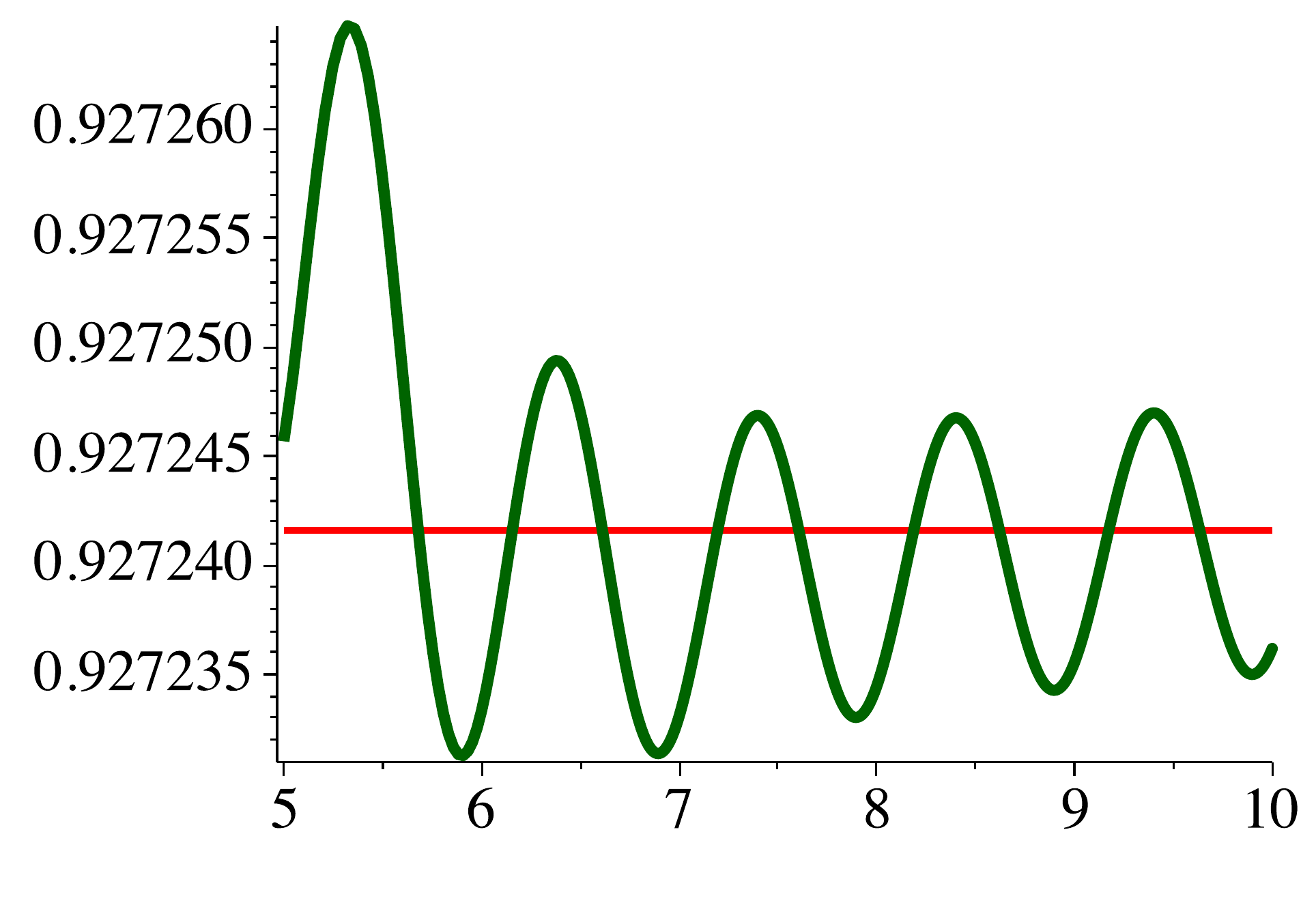}\hspace*{0 pt}
\includegraphics[height=3.8cm]{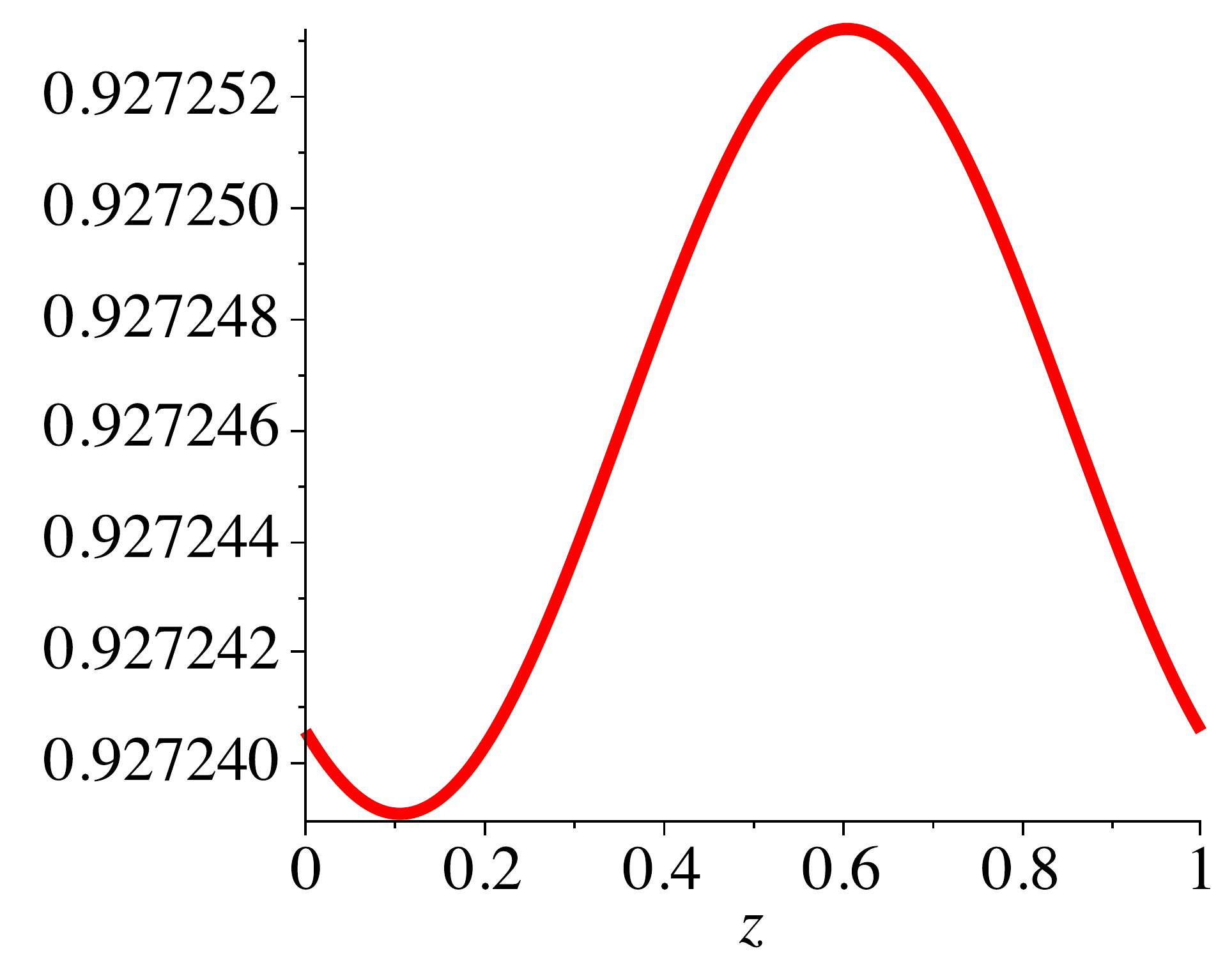}\hspace*{0 pt}
};
\end{tikzpicture}
\end{center}
\vspace*{-.4cm}
\caption{\emph{$p=\frac12$: periodic fluctuations of (i)
$\rho(S_n,K_n)$ (left) for $n=32,\dots, 1024$, (ii)
$\frac{\mathrm{Cov}(S_n,K_n)}{\sqrt{\mathbb{V}(S_n)
(\mathbb{V}(K_n)+1.046)}}$ (middle) in logarithmic scale, and
(iii) $F(n)$ by its Fourier series expansion (right). Note that
the fluctuations are only visible by proper corrections either
in the denominator or in the numerator because the amplitude of
$F$ is very small: $|F(\cdot)|\le 1.5\times 10^{-5}$.}}
\end{figure}

Furthermore, we show that this different behavior cannot be ascribed
to the weak measurability of nonlinear dependence of Pearson's
correlation coefficient $\rho$ since the same dependence is also
present in the limiting distribution. (For the univariate central
limit theorems implied by the result below, see Jacquet and
R\'{e}gnier \cite{JaRe} where such results were first established.)

\begin{thm}\label{main-thm-2}
\begin{itemize}
\item[(i)] For $p\ne \frac12$, we have
\[
	\biggl(\frac{S_n-\mathbb{E}(S_n)}
	{\sqrt{\mathbb{V}(S_n)}},\frac{K_n-\mathbb{E}(K_n)}
	{\sqrt{\mathbb{V}(K_n)}}\biggr)^{\intercal}
	\stackrel{d}{\longrightarrow} \mathcal{N}_2(0,I_2),
\]
where $I_2$ denotes the $2\times 2$ identity matrix.
\item[(ii)] For $p=\frac12$, we have
\[
	\Sigma_n^{-\frac12}
	\begin{pmatrix}
		S_n-\mathbb{E}(S_n) \\
		K_n-\mathbb{E}(K_n)
	\end{pmatrix}
		\stackrel{d}{\longrightarrow}\mathcal{N}_2(0,I_2),
\]
where $\Sigma_n$ denotes the (asymptotic) covariance matrix of $S_n$
and $K_n$:
\[
    \Sigma_n := n\begin{pmatrix}
	    \mathscr{F}[g^{(1)}](n)
		& \mathscr{F}[g^{(2)}](n) \\
		\mathscr{F}[g^{(2)}](n)
		& \mathscr{F}[g^{(3)}](n)
	\end{pmatrix}.
\]
\end{itemize}
\end{thm}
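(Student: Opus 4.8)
\emph{The plan.} I would deduce both parts from a single fact: precise orders of magnitude for all joint cumulants of $(S_n,K_n)$, followed by the method of moments. The natural device is Poissonization: study $\widetilde{P}(z,u,v)=e^{-z}\sum_{n\ge0}\mathbb{E}\!\left[e^{uS_n+vK_n}\right]z^{n}/n!$. Since under Poissonization the two subtries are fed by independent Poisson$(pz)$ and Poisson$(qz)$ inputs, the recurrences \eqref{dist-rec} collapse into a single functional equation of the form
\[
  \widetilde{P}(z,u,v)=e^{u}\,e^{(e^{v}-1)z}\,\widetilde{P}(pe^{v}z,u,v)\,\widetilde{P}(qe^{v}z,u,v)+(\text{correction for }n\le1),
\]
the extra factor $e^{v}$ inside the arguments recording the toll $+n$ in the $K_n$-recurrence. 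Taking logarithms linearises the product, so $\widetilde{\kappa}(z,u,v):=\log\widetilde{P}(z,u,v)$ satisfies a divide-and-conquer functional equation, and differentiating $a$ times in $u$ and $b$ times in $v$ at $u=v=0$ yields, for the Poissonized joint cumulant $\widetilde{\kappa}_{a,b}(z)$, an equation
\[
  \widetilde{\kappa}_{a,b}(z)=\widetilde{\kappa}_{a,b}(pz)+\widetilde{\kappa}_{a,b}(qz)+T_{a,b}(z),
\]
whose inhomogeneous term $T_{a,b}$ is assembled from the cumulants of strictly smaller total order and their $z$-derivatives (the latter arising from the $e^{v}z$ arguments). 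I would solve each such equation recursively in $a+b$ by the Mellin transform exactly as in \cite{FuHwZa}: the poles of $\bigl(1-p^{-s}-q^{-s}\bigr)^{-1}$, together with the shifts inherited from $T_{a,b}$, produce the main terms and, when $\frac{\log p}{\log q}\in\mathbb{Q}$, the periodic functions $\mathscr{F}[\cdot]$. A final analytic de-Poissonization step, in the form used in \cite{FuHwZa}, transfers the estimates to the genuine cumulants $\kappa_{a,b}(S_n,K_n)$.

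For part (i) ($p\ne\tfrac12$) the marginal behaviour is classical (\cite{JaRe}; see also \cite{FuHwZa}): in this notation $\widetilde{\kappa}_{2,0}(z)\sim z\,\mathscr{F}[g^{(1)}](z)$, $\widetilde{\kappa}_{0,2}(z)\sim\lambda z\log z$, and $\widetilde{\kappa}_{a,0}(z)=o\!\left(z^{a/2}\right)$, $\widetilde{\kappa}_{0,b}(z)=o\!\left((z\log z)^{b/2}\right)$ for $a,b\ge3$. The new input from the Mellin analysis is $\widetilde{\kappa}_{1,1}(z)\sim z\,\mathscr{F}[g^{(2)}](z)$ (the Poissonized form of Theorem~\ref{main-thm-1}) and, crucially, the uniform bound $\widetilde{\kappa}_{a,b}(z)=o\!\left(z^{a/2}(z\log z)^{b/2}\right)$ for all $a\ge1$, $b\ge1$. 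For $a+b\ge3$ this is automatic once $\widetilde{\kappa}_{a,b}(z)=O\!\left(z(\log z)^{O(1)}\right)$ is shown, since the product scale then carries a $z$-power exceeding $1$; for $(a,b)=(1,1)$ it is exactly the assertion that $\mathrm{Cov}(S_n,K_n)/\sqrt{\mathbb{V}(S_n)\mathbb{V}(K_n)}=O\!\left((\log n)^{-1/2}\right)\to0$. After de-Poissonization, the pair $\bigl((S_n-\mathbb{E}S_n)/\sqrt{\mathbb{V}S_n},\,(K_n-\mathbb{E}K_n)/\sqrt{\mathbb{V}K_n}\bigr)$ has $\kappa_{2,0}=\kappa_{0,2}=1$, $\kappa_{1,1}=\rho(S_n,K_n)\to0$, and every joint cumulant of total order $\ge3$ tending to $0$; these are the joint cumulants of $\mathcal{N}_2(0,I_2)$, which is determined by its moments, so the method of moments gives (i).

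For part (ii) ($p=q=\tfrac12$) one has $\lambda=0$, so $\widetilde{\kappa}_{0,2}(z)\sim z\,\mathscr{F}[g^{(3)}](z)$ is of linear order and all three second cumulants are of order $n$; by Theorem~\ref{main-thm-1} the correlation $F(n)$ stays uniformly within $10^{-5}$ of $0.927$, so $\det(\Sigma_n/n)=\mathscr{F}[g^{(1)}]\mathscr{F}[g^{(3)}]\bigl(1-F(n)^2\bigr)$ is bounded away from $0$, $\Sigma_n/n$ ranges over a compact set of positive-definite matrices, and $\Sigma_n^{-1/2}=n^{-1/2}(\Sigma_n/n)^{-1/2}$ is a well-defined deterministic rescaling of order $n^{-1/2}$. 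The same scheme applies: the Mellin analysis at $p=\tfrac12$ gives $\widetilde{\kappa}_{a,b}(z)=o\!\left(z^{(a+b)/2}\right)$ for $a+b\ge3$, hence, using that cumulants transform tensorially under the linear map $\Sigma_n^{-1/2}$, the vector $\Sigma_n^{-1/2}(S_n-\mathbb{E}S_n,\,K_n-\mathbb{E}K_n)^{\intercal}$ has covariance $\to I_2$ and all cumulants of order $\ge3$ equal to $O\!\left(n^{1-(a+b)/2}(\log n)^{O(1)}\right)\to0$, so the method of moments gives (ii). Alternatively (ii) follows from the contraction method, via the Cram\'{e}r--Wold device applied to $sS_n+tK_n$: from \eqref{dist-rec} the normalized combination obeys a recurrence whose contraction ratios tend to $\sqrt{1/2}$ (as $B_n/n\to\tfrac12$) and whose remainder tends to $0$ by Table~\ref{tb-SKN}, so the limit solves $Y\stackrel{d}{=}\tfrac1{\sqrt2}(Y'+Y'')$ and is centred Gaussian with the variance read off from $\mathbb{V}(sS_n+tK_n)$.

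\emph{The main obstacle.} The heart of everything is the uniform mixed-cumulant bound $\widetilde{\kappa}_{a,b}(z)=o\!\left(z^{a/2}(z\log z)^{b/2}\right)$ ($a,b\ge1$), and its $p=\tfrac12$ counterpart $\widetilde{\kappa}_{a,b}(z)=o\!\left(z^{(a+b)/2}\right)$: one must track, through the recursion in $a+b$, how the $K$-toll ``$+z$'' couples with the $S$-toll ``$+1$'', locate the dominant poles of each $T_{a,b}$, and show that a mixed cumulant never accumulates more than a bounded power of $\log z$ beyond linear order in $z$ --- the surplus $\sqrt{\log z}$ in the marginal $K$-scale being precisely what decorrelates $S_n$ from $K_n$ when $p\ne\tfrac12$. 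A secondary difficulty is that the $\sqrt{\log n}$ mismatch between the two marginal normalizations forces the de-Poissonization to be done carefully enough that its error terms do not reintroduce correlation. Finally, in the symmetric case the periodic fluctuations of $\Sigma_n$ make the purely probabilistic route delicate: a direct bivariate contraction fails because the coefficient matrices $\Sigma_n^{-1/2}\Sigma_{B_n}^{1/2}$ do not converge (a phase shift $\log(B_n/n)\to\log\tfrac12$ survives), so one must either reduce to scalar linear combinations and argue along subsequences where the oscillating coefficients converge, or stay with the analytic/method-of-moments argument, in which $\Sigma_n$ enters only as a deterministic rescaling and the oscillations are harmless.
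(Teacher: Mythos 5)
Your route is genuinely different from the paper's. The paper proves both parts by the multivariate contraction method of Neininger--R\"uschendorf \cite{NeRu2}: writing the recurrence \eqref{dist-rec} in vector form, it sets $\widehat{\Sigma}_n$ equal to the exact covariance matrix, verifies the $L_3$-conditions \eqref{cond-1}--\eqref{cond-2} on the coefficient matrices $M_n^{(i)}=\widehat{\Sigma}_n^{-\frac12}\widehat{\Sigma}_{B_n^{(i)}}^{\frac12}$ and on the toll vector, identifies the fixed-point equation whose solution is $\mathcal{N}_2(0,I_2)$, and finally replaces $\widehat{\Sigma}_n$ by the diagonal (resp.\ asymptotic) matrix via a deterministic lemma. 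The only analytic inputs are the first- and second-moment asymptotics already available (Table~\ref{tb-SKN} and Proposition~\ref{fc-cov}) plus the known univariate CLTs \cite{FuLe2}. Your cumulant/method-of-moments program would instead require controlling \emph{all} mixed Poissonized cumulants $\widetilde{\kappa}_{a,b}$ by an induction over $a+b$, where each inhomogeneous term $T_{a,b}$ is a sum of Mellin convolutions of lower-order cumulants and their derivatives; note that the paper describes the single case $(a,b)=(1,1)$ (the covariance) as ``the most complicated part'' precisely because of one such convolution. Your plan states the decisive uniform bounds ($\widetilde{\kappa}_{a,b}(z)=o(z^{a/2}(z\log z)^{b/2})$, resp.\ $o(z^{(a+b)/2})$ at $p=\tfrac12$) as the main obstacle but does not derive them, and the transfer to fixed-$n$ cumulants is also heavier than you indicate: for orders $\ge 3$ the de-Poissonization corrections are no longer of the simple form $-z\tilde f'\tilde g'$ used in \eqref{VS-VK}, so ``admissibility'' has to be propagated through the whole induction. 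So as it stands the proposal is a plausible but unproven program whose hardest step is exactly the part left open, whereas the contraction route buys the limit law with no higher-moment analysis at all.

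One concrete error to correct: your closing claim that a direct bivariate contraction fails at $p=\tfrac12$ because $\widehat{\Sigma}_n^{-\frac12}\widehat{\Sigma}_{B_n}^{\frac12}$ does not converge is wrong, and the paper's proof works uniformly in both cases for exactly the reason you overlook. The periodic factors $\mathscr{F}[\cdot](n)$ are periodic in $\log n$ with period $\tfrac{|\log p|}{r}$ (equal to $\log 2$ when $p=\tfrac12$), and $B_n/n\to p$ almost surely, so the phase shift $\log(B_n/n)\to\log p$ is an integer multiple of the period; by smoothness of the periodic functions, $\mathscr{F}[\cdot](B_n)/\mathscr{F}[\cdot](n)\to 1$ a.s., whence $M_n^{(1)}\to\sqrt{p}\,I_2$ (this is precisely the computation for $M_n^{(1)}(1,1)$ in the proof of Proposition~\ref{asym-clt}, which uses the exact covariance matrix $\widehat{\Sigma}_n$, oscillations included). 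The oscillations are therefore harmless for the contraction argument, no passage to subsequences or Cram\'er--Wold reduction is needed, and part (ii) follows by the same argument as part (i) together with the observation that at $p=\tfrac12$ one has $\widehat{\Sigma}_n\sim\Sigma_n$ entrywise with $\det(\Sigma_n/n)$ bounded away from $0$ (here your positivity remark via $|F|$ close to $0.927<1$ is the right point).
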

Alternatively, we may define $\Sigma_n:=n\begin{pmatrix}
\mathscr{F}[g^{(1)}](n) & \mathscr{F}[g^{(2)}](n) \\
\mathscr{F}[g^{(2)}](n) & \lambda\log n+\mathscr{F}[g^{(3)}](n)
\end{pmatrix}$. Then both cases can be stated in one as
$
	\Sigma_n^{-\frac12}
	\begin{pmatrix}
		S_n-\mathbb{E}(S_n) \\
		K_n-\mathbb{E}(K_n)
	\end{pmatrix}
		\stackrel{d}{\longrightarrow}\mathcal{N}_2(0,I_2).
$
On the other hand, since for bivariate normal distribution, zero
correlation implies independence (see \cite{To90}), it is more
transparent to split the statement into two cases. See
Figure~\ref{fig-SK} for 3D-plots of the joint distributions of
$(S_n,K_n)$ when $n=10^7$.

\begin{figure}[!h]
\begin{center}
\hspace*{0 cm}
\begin{tikzpicture}[]
\node[align=center] at (0,0) {
\includegraphics[width=4cm]{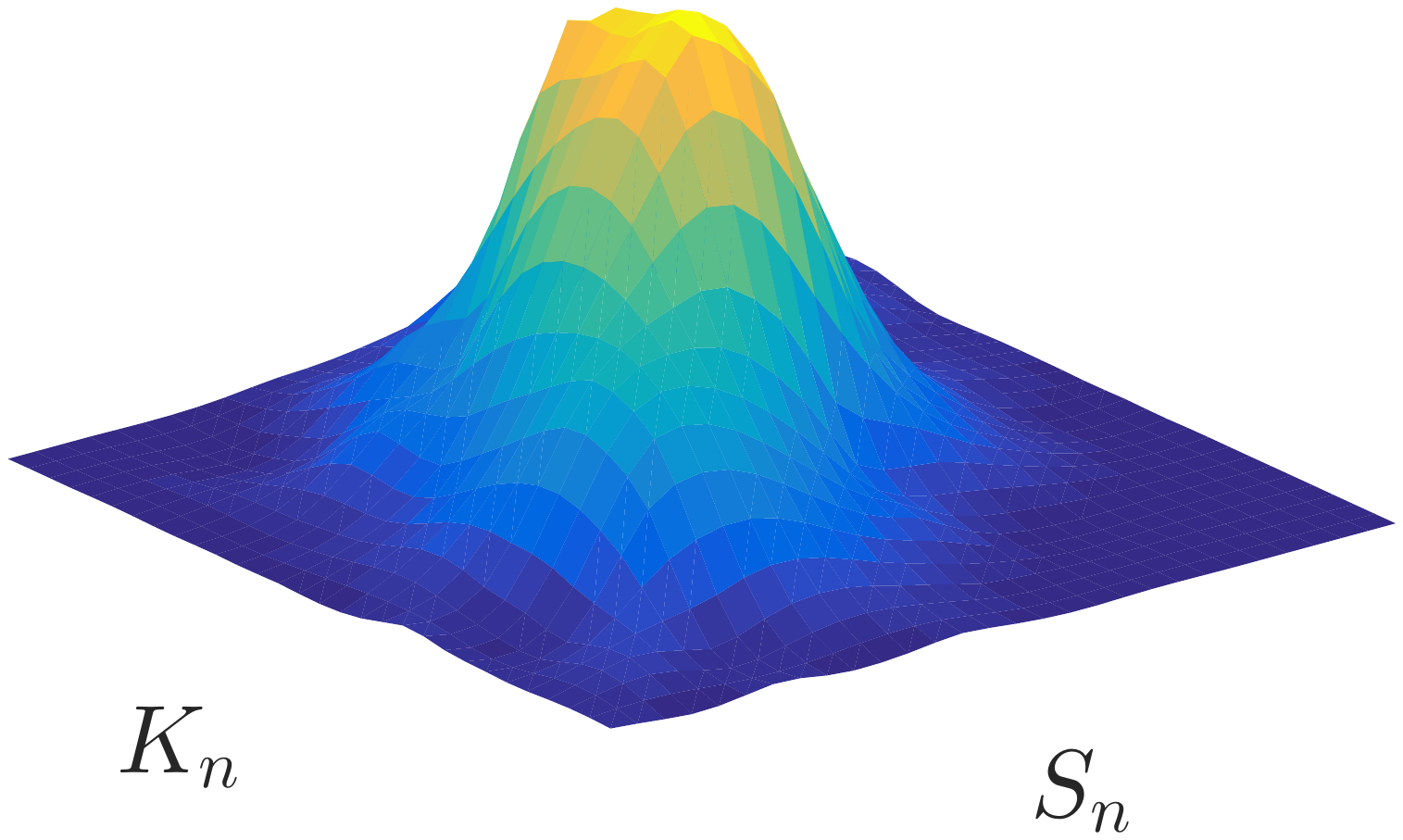}\hspace*{0 pt}
\includegraphics[width=4cm]{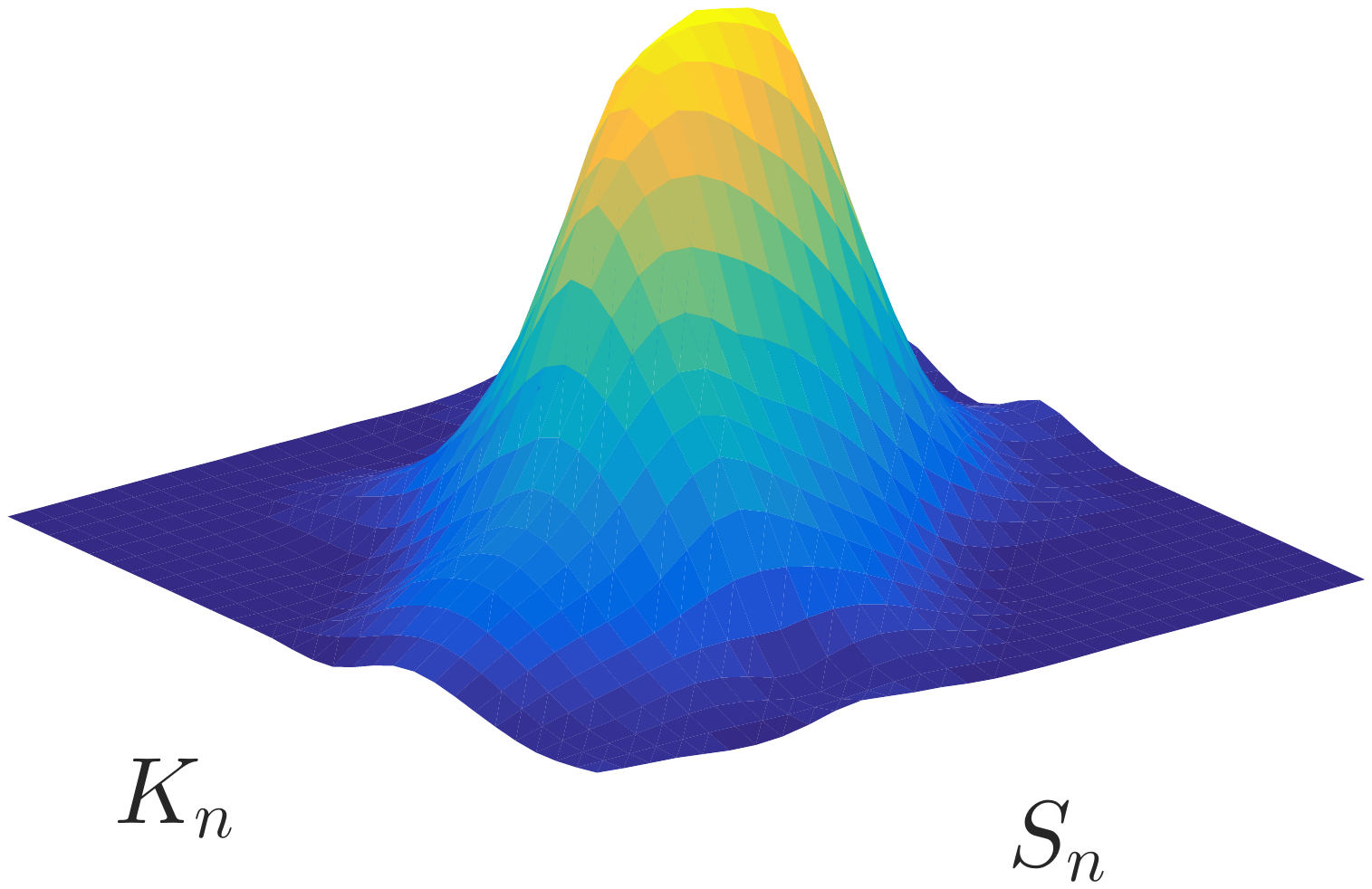}\hspace*{0 pt}
\includegraphics[width=4cm]{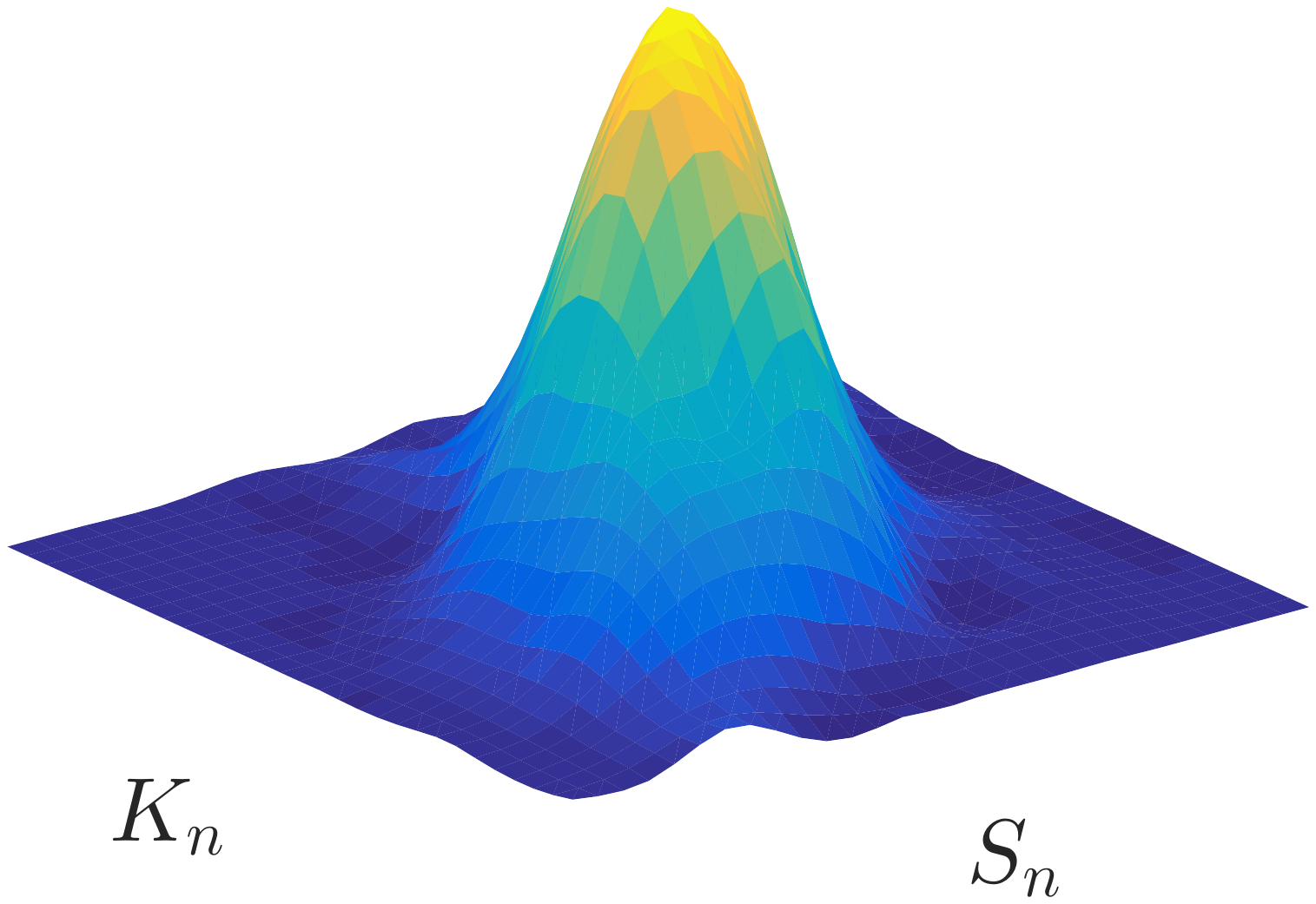}\hspace*{0 pt}\\\\
\includegraphics[width=4cm]{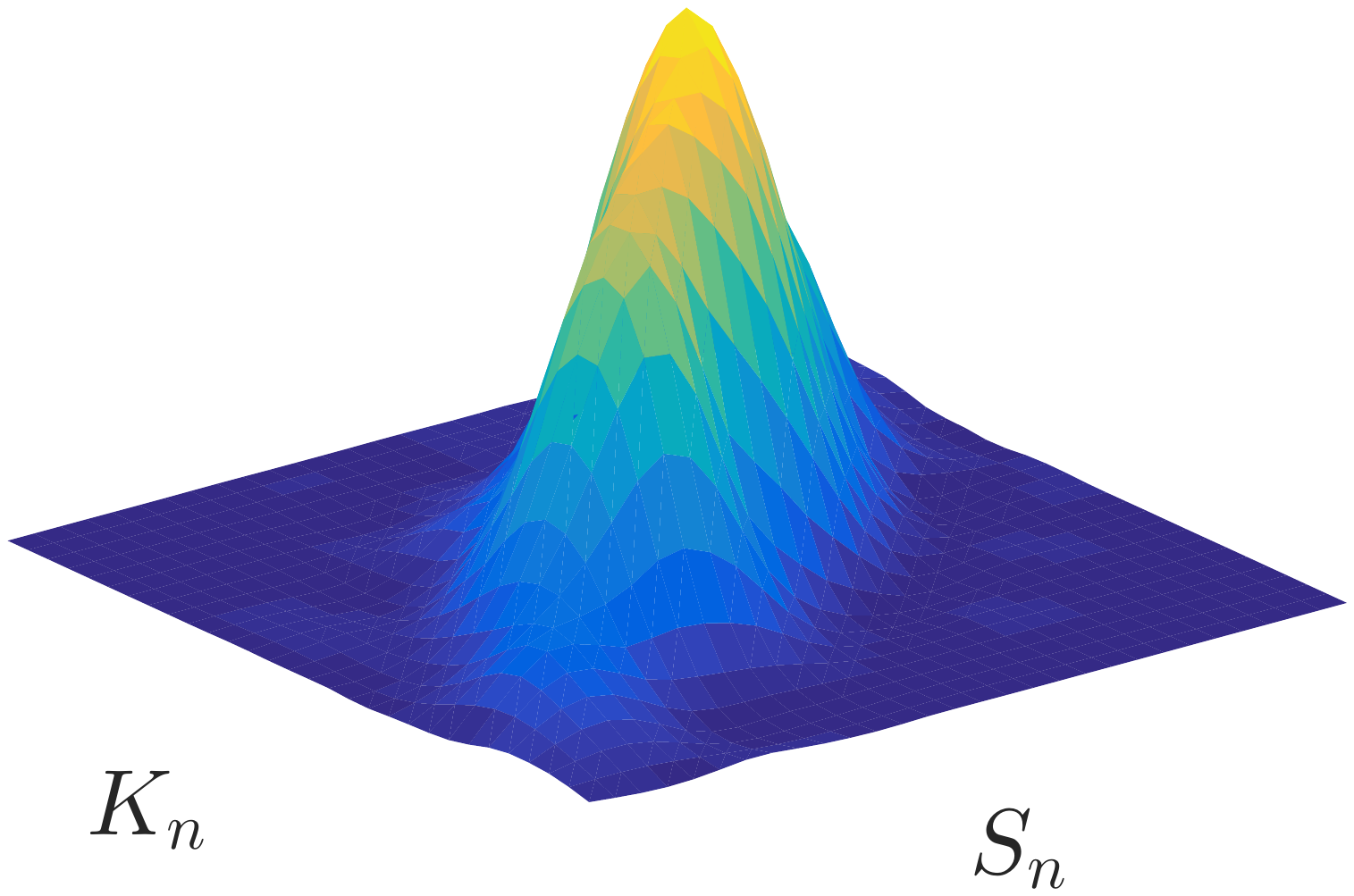}
\includegraphics[width=4cm]{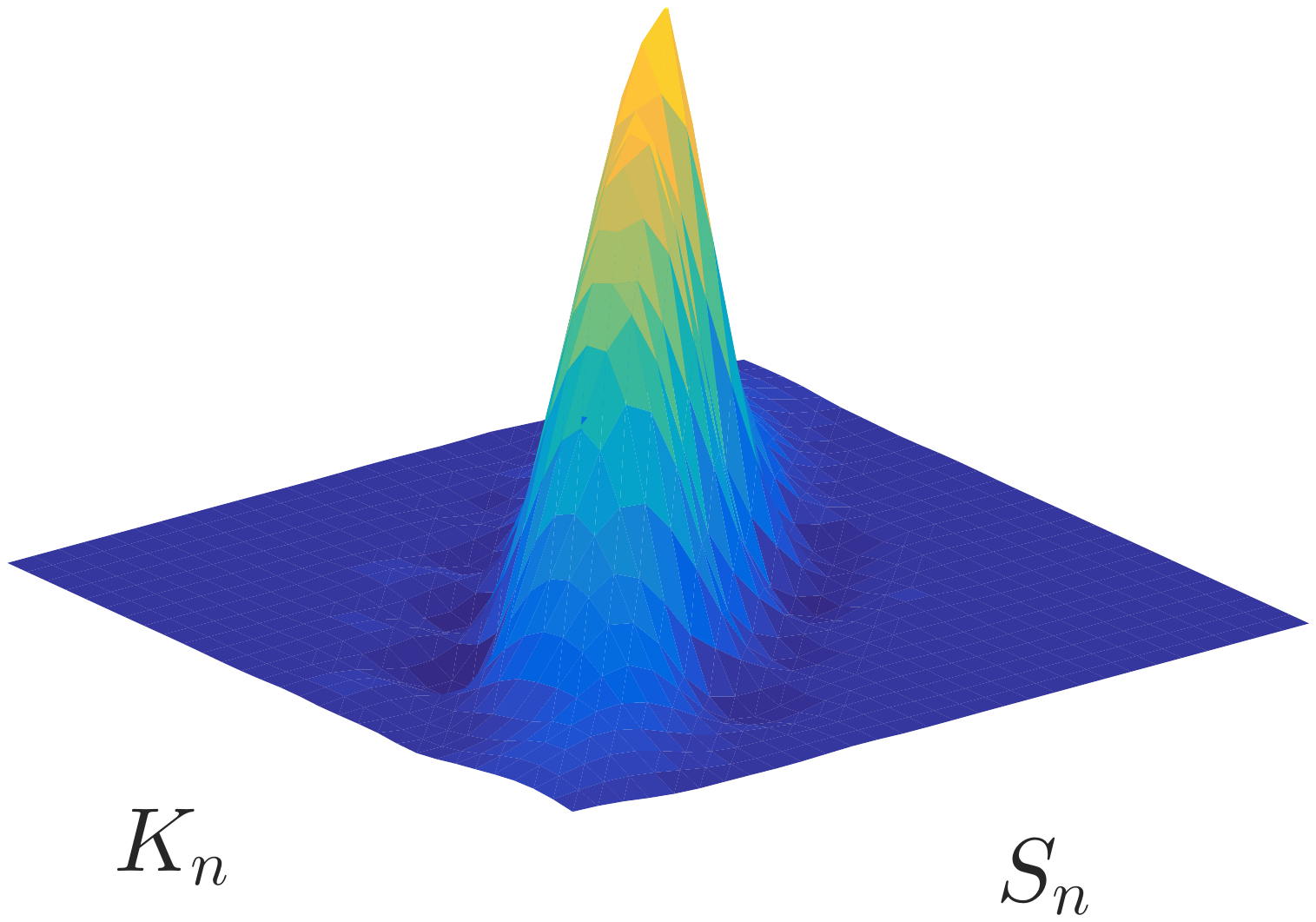}\hspace*{0 pt}
\includegraphics[width=4cm]{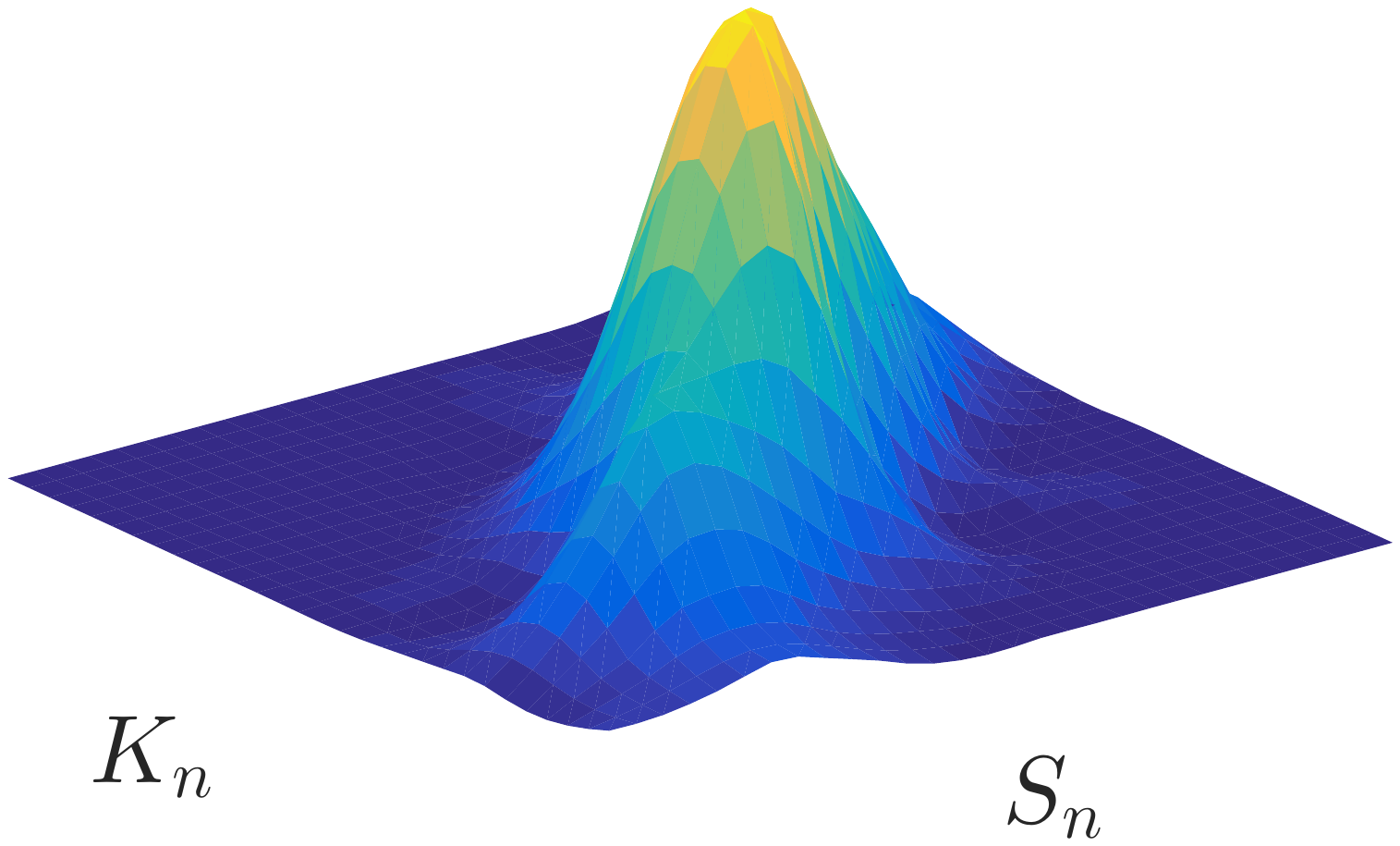}\hspace*{0 pt}\\\\
\includegraphics[width=4cm]{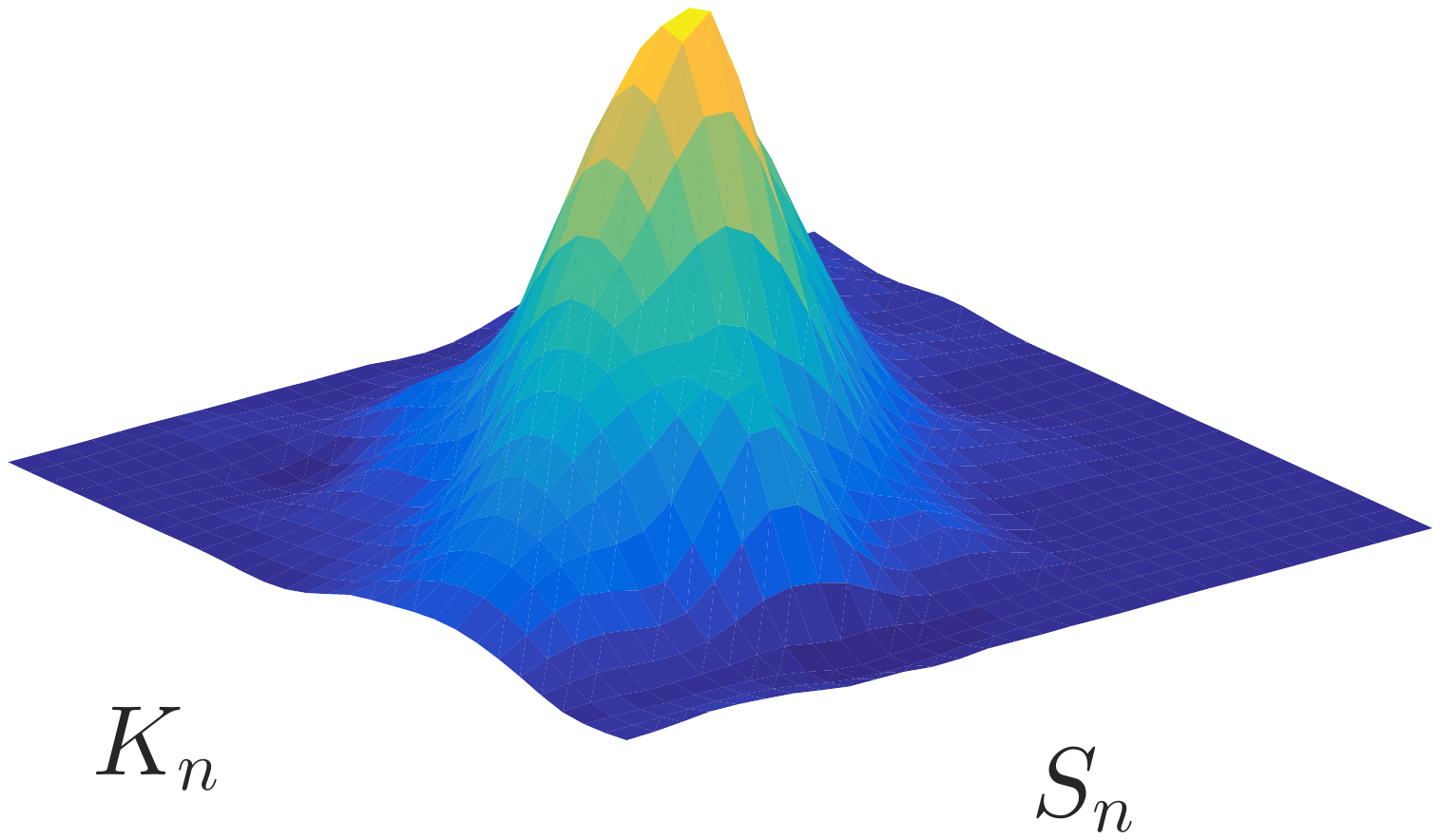}\hspace*{0 pt}
\includegraphics[width=4cm]{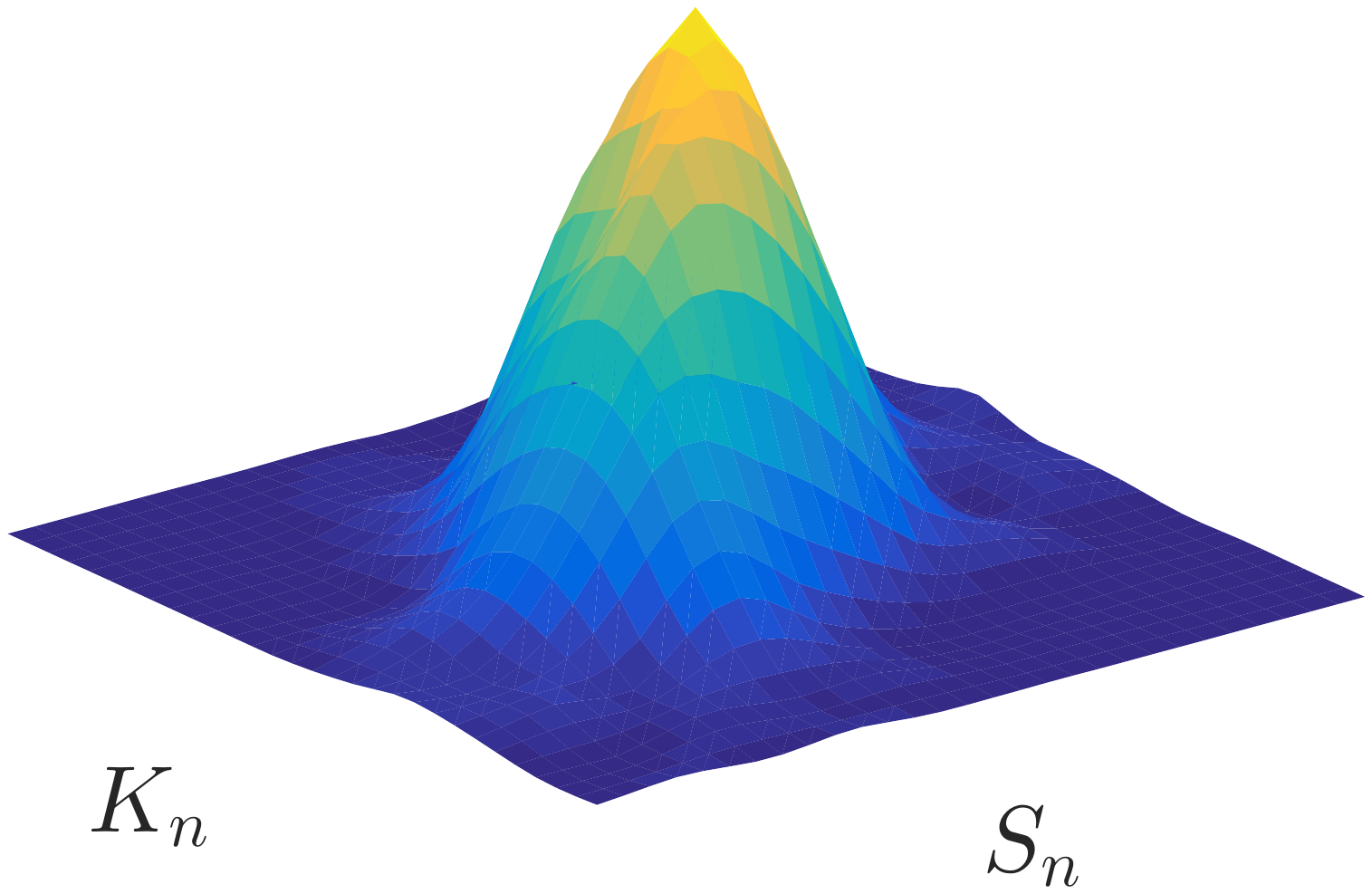}\hspace*{0 pt}
\includegraphics[width=4cm]{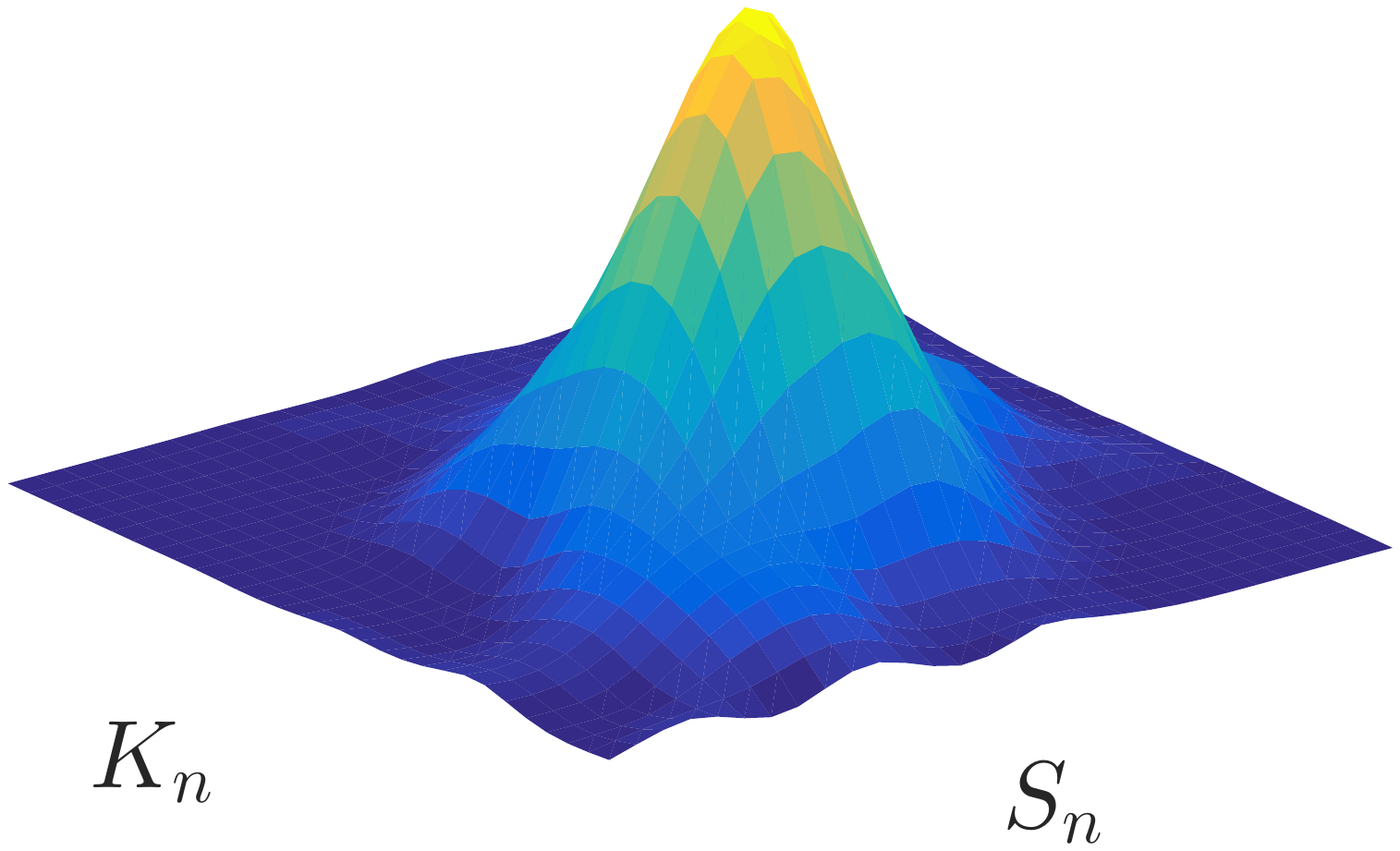}\hspace*{0 pt}
};
\node[] (p04) at (-5.15,1.1) {\tiny $p=0.4$};
\node[] (p05) at (-1,1.1) {\tiny $p=0.5$};
\node[] (p06) at (3.15,1.1) {\tiny $p=0.6$};

\node[] (p01) at (-5.15,4.75) {\tiny $p=0.1$};
\node[] (p02) at (-1,4.75) {\tiny $p=0.2$};
\node[] (p03) at (3.15,4.75) {\tiny $p=0.3$};

\node[] (p07) at (-5.15,-2.5) {\tiny $p=0.7$};
\node[] (p08) at (-1,-2.5) {\tiny $p=0.8$};
\node[] (p09) at (3.15,-2.5) {\tiny $p=0.9$};
\end{tikzpicture}
\end{center}
\vspace*{-.5cm}
\caption{\emph{Joint distributions of $(S_n,K_n)$ by Monte-Carlo
simulations for $n=10^7$ and varying $p$: the case $p=0.5$
is seen to have stronger dependence than the others.}}
\label{fig-SK}
\end{figure}

These results are to be compared with the corresponding ones for
random $m$-ary search trees \cite{ChFuHwNe}, and the differences
for correlation coefficients are summarized in
Table~\ref{tab-tries-mst}.
\begin{table}[!h]
\begin{center}
\begin{tabular}{|c||c|c|}
\hline
trees & $\rho(S_n, K_n)$ & $\rho(S_n, N_n)$ \\ \hline
tries & \multicolumn{1}{c|}{$\left\{\begin{tabular}{@{\ }l@{}}
   $p\neq q:\rightarrow 0$ \\ $p=q:\text{periodic}$
   \end{tabular}\right.$} & $\sim1$ \\ \hline
   \bigcell{c}{m-ary \\ search trees} &
   \multicolumn{2}{c|}{$\left\{\begin{tabular}{@{\ }c@{}}
   $3\le m\le26:\rightarrow0$  \\ $m\ge 27:\text{periodic}$
   \end{tabular}\right.$}   \\ \hline
\end{tabular}
\end{center}
\vspace*{-.5cm}
\caption{\emph{A comparison of the correlation coefficients
for random tries and random $m$-ary search trees:
the size of $m$-ary search trees corresponds to the space
requirement, and the KPL and NPL are defined similarly as
in tries.}} \label{tab-tries-mst}
\end{table}
Furthermore, the joint distribution for $m$-ary search trees
undergoes a phase change at $m=26$: if the branching factor $m$
satisfies $3\le m\le 26$, then the space requirement is
asymptotically independent from KPL and NPL, while for $m\ge27$,
their limiting joint distributions contain periodic fluctuations and
are dependent; see \cite{ChFuHwNe} for more information.

Finally, similar results as those in this paper also hold for other
digital families of trees, but for simplicity we focus on tries in
this paper; see \cite{HwFuZa,FuHwZa} for more references.

\section{Covariance and Correlation Coefficient}

In this section, we sketch the main ideas leading to the proof of
Theorem \ref{main-thm-1} on the asymptotics of the covariance and
correlation coefficient of $S_n$ and $K_n$. For the latter, we also
need the variances of $S_n$ and $K_n$ which have been known for a
long time; see Jacquet and R\'{e}gnier \cite{JaRe}, Kirschenhofer and
Prodinger \cite{KiPr}, Kirschenhofer et al. \cite{KiPrSz},
R\'{e}gnier and Jacquet \cite{ReJa} or the recent paper
\cite{FuHwZa}. (See also Table \ref{tb-SKN} for a summary of these
results.)

Our method of proof is based on the by-now standard two-stage
approach relying on the theory of analytic de-Poissonization and
Mellin transform whose origin can be traced back to Jacquet and
R\'{e}gnier \cite{JaRe}. See Flajolet et al. \cite{FlGoDu} for a
survey on Mellin transform, and Jacquet and Szpankowski \cite{JaSz}
for a survey on analytic de-Poissonization. For the computation of
the covariance, the manipulation can be largely simplified by the
additional notions of Poissonized variance and admissible functions
further developed in our previous papers \cite{FuHwZa,HwFuZa}.

The starting point of our analysis is the recurrence satisfied by
$S_n$ and $K_n$ in (\ref{dist-rec}). A standard means in the
computation of moments of $S_n$ and $K_n$ is the Poisson generating
function, which corresponds to the moments of $S_n$ and $K_n$ with
$n$ replaced by a Poisson random variable with parameter $z$ (this
step is called \emph{Poissonization}).

More precisely, define the Poisson generating function of
$\mathbb{E}(S_n)$ and that of $\mathbb{E}(K_n)$: $\tilde{f}_{1,0}(z)
:=e^{-z}\sum_{n\ge 0} \mathbb{E}(S_n)\frac{z^n}{n!}$ and
$\tilde{f}_{0,1}(z) :=e^{-z}\sum_{n\ge 0}\mathbb{E}(K_n)
\frac{z^n}{n!}$. Then the recurrences \eqref{dist-rec} lead to the
functional equations
\begin{align}\label{f10-f01}
	\begin{cases} \tilde{f}_{1,0}(z)
	=\tilde{f}_{1,0}(pz)+\tilde{f}_{1,0}(qz)+1-(1+z)e^{-z},\\
	\tilde{f}_{0,1}(z)
    =\tilde{f}_{0,1}(pz)+\tilde{f}_{0,1}(qz)+z(1-e^{-z}).
	\end{cases}
\end{align}
From these equations, we obtain, by Mellin transform techniques
\cite{FlGoDu},
\begin{equation}\label{exp-means}
	\tilde{f}_{1,0}(z)
	\sim z\mathscr{F}[\cdot](z),
	\qquad\text{and}\qquad
	\tilde{f}_{0,1}(z)
	\sim h^{-1}z\log z+z\mathscr{F}[\cdot](z),
\end{equation}
for large $|z|$ in the half-plane $\Re(z)\ge\varepsilon>0$, where $h$
denotes the entropy of Bernoulli($p$). Then, by Cauchy's integral
representation and analytic de-Poissonization techniques \cite{JaSz},
we obtain precise asymptotic approximations to $\mathbb{E}(S_n)$ and
to $\mathbb{E}(K_n)$.

Similarly, for the variances $\mathbb{V}(S_n)$ and $\mathbb{V}(K_n)$,
we introduce the Poisson generating functions of the second moments:
$\tilde{f}_{2,0}(z) :=e^{-z}\sum_{n\ge0} \mathbb{E}(S_n^2)
\frac{z^n}{n!}$ and $\tilde{f}_{0,2}(z) :=e^{-z}\sum_{n\ge
0}\mathbb{E}(K_n^2) \frac{z^n}{n!}$, which then satisfy, by
\eqref{dist-rec}, the same type of functional equations as in
\eqref{f10-f01} but with different non-homogeneous parts. Instead of
computing directly asymptotic approximations to the second moments, it
proves computational more advantageous to consider the Poissonized
variances
\begin{align}\label{VS-VK}
    \begin{cases}
		\tilde{V}_S(z)
		:= \tilde{f}_{2,0}(z)-\tilde{f}_{1,0}(z)^2
		-z\tilde{f}_{1,0}'(z)^2,\\
		\tilde{V}_K(z)
		:= \tilde{f}_{0,2}(z)-\tilde{f}_{0,1}(z)^2
		-z\tilde{f}_{0,1}'(z)^2,
	\end{cases}	
\end{align}
and then following the same Mellin-de-Poissonization approach (as for
the means) to derive the first and the third asymptotic estimate in
the second column of Table~\ref{tb-SKN}. It remains to derive the
claimed estimate for the covariance. For that purpose, we then
introduce the Poisson generating function $\tilde{f}_{1,1}(z)
:=e^{-z}\sum_{n\ge 0}\mathbb{E}(S_nK_n)\frac{z^n}{n!}$, which
satisfies, again by \eqref{dist-rec},
\begin{align*}
	\tilde{f}_{1,1}(z)
	&=\tilde{f}_{1,1}(pz)+\tilde{f}_{1,1}(qz)
	+\tilde{f}_{1,0}(pz)\bigl(\tilde{f}_{0,1}(qz)+z\bigr)
	+\tilde{f}_{1,0}(qz)\bigl(\tilde{f}_{0,1}(pz)+z\bigr)\\
	&\qquad+pz\tilde{f}'_{1,0}(pz)+qz\tilde{f}'_{1,0}(qz)
	+\tilde{f}_{0,1}(pz)
	+\tilde{f}_{0,1}(qz)+z(1-e^{-z}).
\end{align*}
To compute the covariance, it is beneficial to introduce now the
\emph{Poissonized covariance} (see \eqref{VS-VK} or \cite{FuHwZa}
for similar details)
\[
	\tilde{C}(z)=\tilde{f}_{1,1}(z)
	-\tilde{f}_{1,0}(z)\tilde{f}_{0,1}(z)
	-z\tilde{f}'_{1,0}(z)\tilde{f}'_{0,1}(z),
\]
which satisfies
\begin{align}\label{Cz}
	\tilde{C}(z)
	=\tilde{C}(pz)+\tilde{C}(qz)+\tilde{h}_1(z)+\tilde{h}_2(z),
\end{align}
where
\[
	\tilde{h}_1(z)
	=pqz\bigl(\tilde{f}'_{1,0}(pz)-\tilde{f}'_{1,0}(qz)\bigr)
	\bigl(\tilde{f}'_{0,1}(pz)-\tilde{f}'_{0,1}(qz)\bigr),
\]
and
\begin{align*}
	\tilde{h}_2(z)
	&=ze^{-z}\bigl(\tilde{f}_{1,0}(pz)+\tilde{f}_{1,0}(qz)
	+p(1-z)\tilde{f}'_{1,0}(pz)
	+q(1-z)\tilde{f}'_{1,0}(qz)\bigr)\\
	&+e^{-z}\bigl((1+z)\tilde{f}_{0,1}(pz)
	+(1+z)\tilde{f}_{0,1}(qz)
	-pz^2\tilde{f}'_{0,1}(pz)-qz^2\tilde{f}'_{0,1}(qz)
	\bigr)\\&+ze^{-z}\bigl(1-(1+z^2)e^{-z}\bigr).
\end{align*}
Note that $\tilde{h}_1$ is zero when $p=\frac12$. Furthermore, from
(\ref{exp-means}) (which can be differentiated since they hold in a
sector $\mathscr{S} =\{z\in\mathbb{C}\ :\ \Re(z)\geq\epsilon,
\vert{\rm Arg}(z)\vert\le\theta_0\}$ with $0<\theta_0<\pi/2$ in the
complex plane), we obtain that $\tilde{h}_1(z)=O(|z|)$ and
$\tilde{h}_2(z)$ is exponentially small for large $|z|$ in
$\Re(z)>0$. Also $\tilde{h}_1(z)+\tilde{h}_2(z)=O(|z|^2)$ as $z\to0$.
Thus the Mellin transform of $\tilde{h}_1(z)+\tilde{h}_2(z)$ exists
in the strip $\langle -2,0\rangle$, and we have then the inverse
Mellin integral representation
\[
	\tilde{C}(z) =
	\frac1{2\pi i}\int_{-\frac32-i\infty}
	^{-\frac32+i\infty}
	\frac{\mathscr{M}[\tilde{h}_1(z)
	+\tilde{h}_2(z);s]}{1-p^{-s}-q^{-s}}\,
	z^{-s}\mathrm{d} s,
\]
where $\mathscr{M}[\phi(z);s] := \int_0^\infty \phi(z)
z^{s-1}\mathrm{d} z$ denotes the Mellin transform of $\phi$.

We then show that $\mathscr{M}[\tilde{h}_1(z);s]$ can be analytically
continued to the vertical line $\Re(s)=-1$ and has no singularities
there. This is the most complicated part of the proof because
$\tilde{h}_1(z)$ contains the product of the two terms
$\tilde{f}'_{1,0}(pz) -\tilde{f}'_{1,0}(qz)$ and
$\tilde{f}'_{0,1}(pz) -\tilde{f}'_{0,1}(qz)$ and thus
$\mathscr{M}[\tilde{h}_1(z);s]$ becomes a Mellin convolution
integral. In \cite{FuHwZa}, a general procedure was given for the
simplification of such integrals (see \cite[p.\ 24 \emph{et
seq.}]{FuHwZa}). This simplification procedure and a direct
application of the theory of admissible functions of analytic
de-Poissonization now yield \begin{pro}\label{fc-cov} The covariance
of $S_n$ and $K_n$ is asymptotically linear:
\[
    \mathrm{Cov}(S_n,K_n)\sim n\mathscr{F}[g^{(2)}](n).
\]
Here
\begin{align}\label{g2k}
\begin{split}
	g^{(2)}_k
	&=\frac{\Gamma(\chi_k)}{h}\Bigl(1-\frac{\chi_k+2}
	{2^{\chi_k+1}}\Bigr)-\frac{1}{h^2}
	\sum_{j\in\mathbb{Z}\setminus\{0\}}
	\Gamma(\chi_{k-j}+1)(\chi_j-1)\Gamma(\chi_j)\\
	&-\frac{\Gamma(\chi_k+1)}{h^2}
	\Bigl(\gamma+1+\psi(\chi_k+1)
	-\frac{p\log^2p+q\log^2q}{2h}\Bigr)\\
	&+\frac{1}{h}\sum_{\ell\ge 2}
	\frac{(-1)^{\ell}(p^{\ell}+q^{\ell})}
	{\ell!(1-p^{\ell}-q^{\ell})}\,\Gamma(\chi_k+\ell-1)
	(2\ell^2-2\ell+1+\chi_k(2\ell-1)),
\end{split}
\end{align}
where $\gamma$ denotes Euler's constant, $\psi(z)$ is the digamma
function and $\chi_k$ is defined in \eqref{gk}.
\end{pro}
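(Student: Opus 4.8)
The plan is to read off the large-$|z|$ behaviour of the Poissonized covariance $\tilde C(z)$ from the inverse Mellin integral already displayed, and then de-Poissonize. Write $M(s):=\mathscr{M}[\tilde h_1(z)+\tilde h_2(z);s]$. Since $|p^{-s}+q^{-s}|\le p^{-\Re(s)}+q^{-\Re(s)}<1$ for $\Re(s)<-1$, the denominator $1-p^{-s}-q^{-s}$ has no zero with $\Re(s)<-1$, while on $\Re(s)=-1$ its only zeros are the simple ones $s=-1+\chi_k$ (with $\chi_0=0$ always present); moreover $\mathscr{M}[\tilde h_2;s]$ is analytic for $\Re(s)>-2$ and $\mathscr{M}[\tilde h_1;s]$, by the continuation result the excerpt is about to establish, has no pole on $\Re(s)=-1$. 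So shifting the contour from $\Re(s)=-\tfrac32$ to the right, to $\Re(s)=-1+\delta$ for small fixed $\delta>0$, crosses exactly the poles $s=-1+\chi_k$; using
\[
\frac{d}{ds}\bigl(1-p^{-s}-q^{-s}\bigr)\Big|_{s=-1+\chi_k}=(\log p)\,p^{1-\chi_k}+(\log q)\,q^{1-\chi_k}=p\log p+q\log q=-h
\]
(where $p^{-\chi_k}=q^{-\chi_k}=1$, an identity used repeatedly below), the residue at $s=-1+\chi_k$ is $-\tfrac1h M(-1+\chi_k)\,z^{1-\chi_k}$, so $\tilde C(z)=z\,\mathscr{F}[g^{(2)}](z)+O(|z|^{1-\delta})$ uniformly in $\mathscr{S}$ with $g^{(2)}_k=-\tfrac1h\,M(-1+\chi_k)$; the remainder estimate and the summability of $k\mapsto g^{(2)}_k$ both come from the exponential decay of the Gamma factors in $M(s)$ along vertical lines. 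Finally $\tilde C(z)$ is admissible in the sense of analytic de-Poissonization (the correction terms in its definition being exactly what makes it so), so the admissible-functions results of \cite{JaSz,FuHwZa} give $\mathrm{Cov}(S_n,K_n)=\tilde C(n)+(\text{corrections assembled from low-order derivatives of }\tilde f_{1,0},\tilde f_{0,1})$ with the corrections $o(n)$, whence $\mathrm{Cov}(S_n,K_n)\sim n\,\mathscr{F}[g^{(2)}](n)$.

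Everything reduces to computing $M(-1+\chi_k)=\mathscr{M}[\tilde h_1;-1+\chi_k]+\mathscr{M}[\tilde h_2;-1+\chi_k]$, and a common input is that \eqref{f10-f01} yields at once $\mathscr{M}[\tilde f_{1,0};s]=\tfrac{-(s+1)\Gamma(s)}{1-p^{-s}-q^{-s}}$ and $\mathscr{M}[\tilde f_{0,1};s]=\tfrac{-\Gamma(s+1)}{1-p^{-s}-q^{-s}}$, whose relevant poles are: the fundamental ones $s=-1+\chi_j$, a \emph{double} pole of $\mathscr{M}[\tilde f_{0,1};\cdot]$ at $s=-1$, and poles at the negative integers $s=-\ell$ ($\ell\ge2$) with residues carrying $\tfrac{(-1)^{\ell}}{\ell!\,(1-p^\ell-q^\ell)}$. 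The $\tilde h_2$-part is then routine: $\tilde h_2(z)$ is exponentially small, is $O(z^2)$ near $0$, and is a combination of $e^{-z}$ and $e^{-2z}$ times the dilated functions $\tilde f_{1,0}(pz),\tilde f_{1,0}(qz),\tilde f_{0,1}(pz),\tilde f_{0,1}(qz)$, their first derivatives, and powers of $z$; substituting the Mellin integrals of the dilated functions turns $\mathscr{M}[\tilde h_2;s]$ into a finite sum of Mellin convolutions with $\Gamma$, and collecting residues at $s=-1+\chi_k$ — where the prefactors $p,q$ from the $pz,qz$ arguments combine via $p^{-\chi_j}=q^{-\chi_j}=1$, so that no explicit $p$ survives — produces, after the several contributions are simplified, the term $\tfrac{\Gamma(\chi_k)}{h}\bigl(1-\tfrac{\chi_k+2}{2^{\chi_k+1}}\bigr)$ (the $2^{\chi_k+1}$ from the $e^{-2z}$-pieces), the two-index term $-\tfrac1{h^2}\sum_{j\neq0}\Gamma(\chi_{k-j}+1)(\chi_j-1)\Gamma(\chi_j)$ (from the residues at the fundamental poles $\chi_j$), the digamma/Euler-constant term $-\tfrac{\Gamma(\chi_k+1)}{h^2}\bigl(\gamma+1+\psi(\chi_k+1)-\tfrac{p\log^2p+q\log^2q}{2h}\bigr)$ (chiefly from the double pole at $s=-1$), and part of the $\sum_{\ell\ge2}$-series of \eqref{g2k}, the factor $p^\ell+q^\ell$ there appearing as $p^\ell$ (from $\tilde f(pz)$) plus $q^\ell$ (from $\tilde f(qz)$).

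The hard part is $\mathscr{M}[\tilde h_1;-1+\chi_k]$. Write $\tilde h_1(z)=pq\,z\,A(z)B(z)$ with $A(z)=\tilde f_{1,0}'(pz)-\tilde f_{1,0}'(qz)$ and $B(z)=\tilde f_{0,1}'(pz)-\tilde f_{0,1}'(qz)$; the scaling rule $\mathscr{M}[\phi(cz);w]=c^{-w}\mathscr{M}[\phi;w]$ and $\mathscr{M}[\phi';w]=-(w-1)\mathscr{M}[\phi;w-1]$ give, in the respective fundamental strips,
\[
\mathscr{M}[A;w]=\frac{\Gamma(w+1)\,(p^{-w}-q^{-w})}{1-p^{1-w}-q^{1-w}},\qquad \mathscr{M}[B;v]=\frac{(v-1)\,\Gamma(v)\,(p^{-v}-q^{-v})}{1-p^{1-v}-q^{1-v}},
\]
so $\mathscr{M}[\tilde h_1;s]=pq\,\mathscr{M}[AB;s+1]=\tfrac{pq}{2\pi i}\int_{(c)}\mathscr{M}[A;w]\,\mathscr{M}[B;s+1-w]\,\mathrm{d}w$. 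The reason this convolution stays meaningful up to $\Re(s)=-1$, i.e.\ at $s+1=\chi_k$, is a cancellation of the dominant parts: since $p^{-\chi_k}=q^{-\chi_k}=1$, the leading $\mathscr{F}[\cdot]$-part of $\tilde f_{1,0}'$ satisfies $\mathscr{F}[\cdot](pz)=\mathscr{F}[\cdot](qz)$, so $A(z)$ decays at infinity, while of the $\tfrac1h(\log z+1)$ leading parts of $\tilde f_{0,1}'$ only the constant $\tfrac1h\log\tfrac pq$ survives in $B(z)$ — equivalently, the simple zero of $1-p^{1-w}-q^{1-w}$ at each imaginary point $w=\chi_j$ is cancelled by the simple zero of $p^{-w}-q^{-w}$ (and likewise for $\mathscr{M}[B;\cdot]$), so both transforms are regular there, with e.g.\ $\mathscr{M}[A;\chi_k]=\tfrac1h\Gamma(\chi_k+1)\log\tfrac pq$. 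Setting $s+1=\chi_k$ and again using $p^{-\chi_k}=q^{-\chi_k}=1$ collapses the shifted exponentials ($p^{-(\chi_k-w)}=p^{w}$, $1-p^{1-\chi_k+w}-q^{1-\chi_k+w}=1-p^{1+w}-q^{1+w}$), leaving a one-variable contour integral of a ratio of Gamma factors over $(1-p^{1-w}-q^{1-w})(1-p^{1+w}-q^{1+w})$; closing it and summing residues — at the Gamma poles $w=-1,-2,\dots$ and $w=\chi_k+1,\chi_k+2,\dots$, at the simple pole $w=\chi_k$ (which, with $\mathscr{M}[A;\chi_k]$, contributes a $\lambda\,\Gamma(\chi_k+1)$-piece, $\lambda=\tfrac{pq\log^2(p/q)}{h^3}$, vanishing when $p=q$), and at the denominator zeros $w=2-\chi_j$, $w=\chi_{k+j}-2$ — produces further $p$-dependent contributions that all vanish when $p=q$. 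Running these through the general convolution-simplification procedure of \cite[p.~24 \emph{et seq.}]{FuHwZa} and combining with the $\tilde h_2$-contribution completes \eqref{g2k}: the $w=\chi_k$ residue supplies the $\lambda\,\Gamma(\chi_k+1)$ term that merges with the $\tilde h_2$-contributions into the second line of \eqref{g2k} (where $\lambda=\tfrac{(p\log^2p+q\log^2q)-h^2}{h^3}$), while the remaining $\tilde h_1$-residues combine, after reindexing, with the $\tilde h_2$-series to yield the full $\tfrac1h\sum_{\ell\ge2}\tfrac{(-1)^\ell(p^\ell+q^\ell)}{\ell!(1-p^\ell-q^\ell)}\Gamma(\chi_k+\ell-1)\bigl(2\ell^2-2\ell+1+\chi_k(2\ell-1)\bigr)$. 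I expect the principal obstacle to be exactly this last stage — proving the analytic continuation of $\mathscr{M}[\tilde h_1;s]$ to $\Re(s)=-1$ with no singularity, and then carrying out the residue bookkeeping (several infinite families of poles, some nearly coincident) that reorganizes the Mellin convolution into the closed form \eqref{g2k} — precisely what the excerpt calls "the most complicated part of the proof".
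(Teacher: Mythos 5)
Your proposal follows essentially the route the paper itself takes for Proposition~\ref{fc-cov}: Poissonized covariance, the inverse Mellin integral on $\Re(s)=-\frac32$, analytic continuation of $\mathscr{M}[\tilde h_1;s]$ to $\Re(s)=-1$ (the hard step, handled there by the convolution--simplification procedure of \cite{FuHwZa}), collection of the residues at $s=-1+\chi_k$, and JS-admissibility to transfer $\tilde C(n)$ to $\mathrm{Cov}(S_n,K_n)$; your checkable intermediate formulas (the transforms of $\tilde f_{1,0},\tilde f_{0,1}$, of $A$ and $B$, the cancellation of the dominant zeros, $\mathscr{M}[A;\chi_k]=\frac1h\Gamma(\chi_k+1)\log\frac pq$, and the $\lambda\Gamma(\chi_k+1)$-piece from the pole of $\Gamma(\chi_k-w)$ at $w=\chi_k$) are correct, and you defer the final residue bookkeeping to \cite{FuHwZa} exactly as the paper does. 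Three small corrections. First, a sign: the contribution of a pole crossed when the line is moved to the \emph{right} is \emph{minus} the residue, so from the residue $-\frac1hM(-1+\chi_k)z^{1-\chi_k}$ you should conclude $g^{(2)}_k=+\frac1hM(-1+\chi_k)$, not $-\frac1hM(-1+\chi_k)$ (check against $\mathbb{E}(S_n)\sim n/h$, which comes out with the right sign only with the extra minus). Second, the zeros of $1-p^{-s}-q^{-s}$ other than $s=-1+\chi_k$ lie in $\Re(s)>-1$, and in the irrational case they accumulate on the line $\Re(s)=-1$; hence shifting to a fixed $\Re(s)=-1+\delta$ does not in general yield an $O(|z|^{1-\delta})$ remainder (it does in the rational case for $\delta$ small). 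Only $o(|z|)$ is needed for the stated asymptotics, and this is the standard fix, but your error claim as written is too strong. Third, in your inventory of poles for the collapsed convolution integral, the zeros of $1-p^{1-w}-q^{1-w}$ and $1-p^{1+w}-q^{1+w}$ sit at $w=\chi_j$ (cancelled by the numerators) together with, in the asymmetric case, points off the imaginary axis; they are not at $w=2-\chi_j$ or $w=\chi_{k+j}-2$, so that part of the bookkeeping would need redoing before it could reproduce \eqref{g2k}. None of these affects the overall strategy, which matches the paper's.
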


\begin{Rem}
If $\frac{\log p}{\log q}\not\in\mathbb{Q}$, then only $k=0$ is
relevant and the second term (the sum over $j$) on the right-hand
side of \eqref{g2k} has to be dropped. Also the first term here
$\frac{\Gamma(\chi_k)}{h} \bigl(1-\frac{\chi_k+2}
{2^{\chi_k+1}}\bigr)$ is taken to be its limit $\frac1h(\log
2+\frac12)$ as $\chi_k\to0$ when $k=0$.
\end{Rem}

The asymptotic estimate for the correlation coefficient in Theorem
\ref{main-thm-1} now follows from this and the results for the the
variances of $S_n$ and $K_n$ (see Table \ref{tb-SKN}), where
expressions for $g^{(1)}_k$ and $g^{(3)}_k$ can be found, e.g., in
\cite{FuHwZa}. For convenience, we give below the expressions in the
unbiased case. Note that both $\mathscr{F}[g^{(1)}](n)$ and
$\mathscr{F}[g^{(3)}](n)$ are strictly positive; see Schachinger
\cite{Sch} for details.

When $p=\frac12$, an alternative expression to \eqref{g2k} (avoiding
the convolution of two Fourier series) is
\begin{align*}
	g^{(2)}_k
	&=\frac{\Gamma(\chi_k)\Bigl(1-\frac{\chi_k^2+\chi_k+4}
	{2^{\chi_k+2}}\Bigr)}{\log 2}
	+\frac1{\log 2}\sum_{\ell\ge 1}\frac{(-1)^{\ell}
	\Gamma(\chi_k+\ell)
	\left(\ell(2\ell+1)(\chi_k+\ell)-(\ell+1)^2\right)}
	{(\ell+1)!(2^{\ell}-1)};
\end{align*}
see the discussion of the size of tries in \cite{FuHwZa}, where a
similar alternative expression was given for $g^{(1)}_k$, which reads
\begin{align*}
	g^{(1)}_k
	=-\frac{\Gamma(\chi_k-1)\chi_k(\chi_k+1)^2}{4\log 2}
	+\frac{2}{\log 2}\sum_{\ell\ge 1}\frac{(-1)^{\ell}
	\Gamma(\chi_k+\ell)
	\ell \bigl(\ell(\chi_k+\ell)-1\bigr)}
	{(\ell+1)!(2^{\ell}-1)}.
\end{align*}
Moreover, also in \cite{FuHwZa}, the following expression for
$g^{(3)}_k$ can be found
\begin{align*}
	g^{(3)}_k
	=\frac{\Gamma(\chi_k)\Bigl(1-\frac{\chi_k^2-\chi_k+4}
	{2^{\chi_k+2}}\Bigr)}{\log 2}
	+\frac{2}{\log 2}\sum_{\ell\ge 1}
	\frac{(-1)^{\ell}\Gamma(\chi_k+\ell)(\ell(\chi_k+\ell-1)-1)}
	{\ell!(2^{\ell}-1)}.
\end{align*}
Note that $\chi_k=\frac{2k\pi i}{\log 2}$ and $2^{\chi_k}=1$, and the
reason of retaining $2^{\chi_k+2}$ in the denominator is to give a
uniform expression for all $k$ (notably $k=0$). These provide an
explicit expression for the periodic function $F(n)$ in Theorem
\ref{main-thm-1}. Also, since all the periodic functions have very
small amplitude, the average value of the periodic function $F(z)$ can
be well-approximated by
\[
	\frac{g^{(2)}_0}{\sqrt{g^{(1)}_0g^{(3)}_0}}
	\approx 0.9272416035\cdots.
\]

\section{Limit Law}\label{ll}

In this section, we prove Theorem \ref{main-thm-2}, part (i); the
proof of part (ii) is similar and skipped here. The key tool of the
proof is the multivariate version of the contraction method; see
Neininger and R\"{u}schendorf \cite{NeRu2}. More precisely, we will
use Theorem 3.1 in \cite{NeRu2}.

We first recall the expression for the square-root of a
positive-definite $2\times 2$ matrix $M=\begin{pmatrix} a & b \\ b &
c\end{pmatrix}$. It is well-known that such a matrix has exactly one
positive-definite square root which is given by
\[
	M^{\frac12}=\frac{1}{\sqrt{a+c+2\sqrt{ac-b^2}}}
	\begin{pmatrix}
		a+\sqrt{ac-b^2} & b \\
		b & c+\sqrt{ac-b^2}
	\end{pmatrix},
\]
with the inverse
\begin{align}\label{A-half}
	M^{-\frac12}=\frac1
	{\sqrt{(ac-b^2)\bigl(a+c+2\sqrt{ac-b^2}\bigr)}}
	\begin{pmatrix}
		c+\sqrt{ac-b^2} & -b \\
		-b & a+\sqrt{ac-b^2}
	\end{pmatrix}.
\end{align}
Now we sketch the proof of Theorem \ref{main-thm-2}, Part (i).

\paragraph{Proof of Theorem \ref{main-thm-2}, Part (i).}
First note that
\[
	\begin{pmatrix}
		S_n \\
		K_n
	\end{pmatrix}
	\stackrel{d}{=}
	\begin{pmatrix}
	    1 & 0 \\
		0 & 1
	\end{pmatrix}
	\begin{pmatrix} S_{B_n}\\ K_{B_n}\end{pmatrix}
	+\begin{pmatrix} 1 & 0 \\ 0 & 1\end{pmatrix}
	\begin{pmatrix} S_{n-B_n}^{*}\\ K_{n-B_n}^{*}\end{pmatrix}
	+\begin{pmatrix} 1 \\ n\end{pmatrix},
\]
where the notation is as in Section~\ref{intro}. The contraction
method was specially developed for obtaining limiting distribution
results for such recurrences; see \cite{NeRu2}.

We need some notation. First, define
\begin{align}\label{Sigma-n}
	\widehat{\Sigma}_n:=\begin{pmatrix}
	    \mathbb{V}(S_n) & {\rm Cov}(S_n,K_n) \\
		{\rm Cov}(S_n,K_n) & \mathbb{V}(K_n)
	\end{pmatrix}.
\end{align}
This matrix is clearly positive-definite for all $n$ sufficiently
large. Next define
\[
	M_n^{(1)}:=\widehat{\Sigma}_n^{-\frac12}\widehat{\Sigma}_{B_n}^{\frac12},\qquad
	M_n^{(2)}:=\widehat{\Sigma}_n^{-\frac12}\widehat{\Sigma}_{n-B_n}^{\frac12}
\]
and
\[
	\begin{pmatrix} b_n^{(1)} \\ b_n^{(2)}\end{pmatrix}
	=\widehat{\Sigma}_n^{-\frac12}\begin{pmatrix}
	1-\mu(n)+\mu(B_n)+\mu(n-B_n)\\
	n-\nu(n)+\nu(B_n)+\nu(n-B_n)
	\end{pmatrix},
\]
where $\mu(n)={\mathbb E}(S_n)$ and $\nu(n)={\mathbb E}(K_n)$.

Now to apply the contraction method in \cite{NeRu2}, it suffices
to show that the following conditions hold
\begin{align}
	&b_n^{(i)}\stackrel{L_3}{\longrightarrow} 0,
	\qquad M_n^{(i)}\stackrel{L_3}{\longrightarrow} M_i,
	\label{cond-1}\\
	&\mathbb{E}\bigl(\|M_1\|^3_{\text{op}}
	+\|M_2\|^3_{\text{op}}\bigr)<1,
	\qquad\mathbb{E}\bigl(\|M_n^{(i)}\|^3_{\text{op}}
	\chi_{\{B_n^{(i)}\le j\}\cup\{B_n^{(i)}=n\}}\bigr)
	\longrightarrow 0\label{cond-2}
\end{align}
for $i=1,2$ and $j\in\mathbb{ N}$, where $\stackrel{L_3}
{\longrightarrow}$ denotes convergence in the $L_3$-norm,
$\|\cdot\|_{\text{op}}$ is the operator norm, $\chi_S$ denotes the
characteristic function of set $S$, $B_n^{(1)}=B_n, B_n^{(2)}=n-B_n$
and
\[
	M_1=\begin{pmatrix}
	    \sqrt{p} & 0 \\ 0 & \sqrt{p}
	\end{pmatrix},\qquad
	M_2=\begin{pmatrix}
	    \sqrt{q} & 0 \\ 0 & \sqrt{q}
	\end{pmatrix}.
\]
Then the contraction method in \cite{NeRu2} guarantees that
$(S_n,K_n)$ (centralized and normalized) converges in distribution to
the unique fixed-point with mean $0$, covariance matrix the unity
matrix and finite $L_3$-norm of
\[
	\begin{pmatrix}X_1\\ X_2\end{pmatrix}
	\stackrel{d}{=}
	\begin{pmatrix}\sqrt{p} & 0 \\ 0 & \sqrt{p}\end{pmatrix}
	\begin{pmatrix}X_1\\ X_2\end{pmatrix}
	+\begin{pmatrix}\sqrt{q} & 0 \\ 0 & \sqrt{q}\end{pmatrix}
	\begin{pmatrix}X_1^{*}\\ X_2^{*}\end{pmatrix},
\]
where $(X_1^{*},X_2^{*})$ is an independent copy of $(X_1,X_2)$.
Obviously, the bivariate normal distribution is the solution.
All this is summarized as follows.

\begin{pro}\label{asym-clt} The following convergence in distribution
holds:
\[
	\widehat{\Sigma}_n^{-\frac12}
	\begin{pmatrix}
		S_n-\mathbb{E}(S_n)\\
		K_n-\mathbb{E}(K_n)
	\end{pmatrix}
		\stackrel{d}{\longrightarrow} \mathcal{N}_2(0, I_2).
\]
\end{pro}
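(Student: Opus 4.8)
(outline).}
After the reduction above, the plan is to verify \eqref{cond-1}--\eqref{cond-2} and apply Theorem~3.1 of \cite{NeRu2} with $M_1=\sqrt p\,I_2$, $M_2=\sqrt q\,I_2$. Everything rests on one self-similarity property of the periodic functions in \eqref{gk}: $\mathscr F[g](z)$ is invariant under scaling the argument by $p$ or $q$. Indeed $p^{\chi_k}=e^{\chi_k\log p}=1$, and $\chi_k\log q=\frac{2rk\pi i}{\log p}\cdot\frac{l\log p}{r}=2lk\pi i$ gives $q^{\chi_k}=1$; hence $\mathscr F[g](pn)=\mathscr F[g](qn)=\mathscr F[g](n)$, and the same holds for the derivative, since $z\mathscr F[g]'(z)=-\sum_k\chi_kg_kz^{-\chi_k}$ is again of the form \eqref{gk}. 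Together with the concentration of $B_n$ around $pn$ with standard deviation $\Theta(\sqrt n)$ --- so that $B_n\wedge(n-B_n)\to\infty$ with overwhelming probability and all the asymptotic expansions of \cite{FuHwZa} can be applied at the random arguments $B_n$ and $n-B_n$ --- this is the engine of the proof. (Throughout, all $O$'s and $o$'s below are meant in the $L_3$-sense unless said otherwise.)

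\emph{The coefficients $M_n^{(i)}$.}
By Theorem~\ref{main-thm-1} and the variances in Table~\ref{tb-SKN}, $\widehat\Sigma_n\sim\Sigma_n$ and $\rho(S_n,K_n)\to0$ (here $p\ne\tfrac12$), while the self-similarity together with $\log(pn)=\log n+O(1)$ yields $\mathbb V(S_{pn})\sim p\,\mathbb V(S_n)$, $\mathbb V(K_{pn})\sim p\,\mathbb V(K_n)$ and $\mathrm{Cov}(S_{pn},K_{pn})\sim p\,\mathrm{Cov}(S_n,K_n)$, and likewise with $p\mapsto q$ and, on the typical event, with $pn\mapsto B_n$. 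Substituting these into the explicit formula \eqref{A-half} gives $M_n^{(1)}\to\sqrt p\,I_2$ and $M_n^{(2)}\to\sqrt q\,I_2$ in probability. The delicacy is that $\widehat\Sigma_n$ is \emph{unbalanced} for $p\ne\tfrac12$: $\mathbb V(K_n)\asymp n\log n$ is of strictly larger order than $\mathbb V(S_n)\asymp n$ and $\mathrm{Cov}(S_n,K_n)\asymp n$, so one cannot factor out a scalar. The way around this is to expand \eqref{A-half} to first order in $\rho(S_n,K_n)$ and notice that the off-diagonal corrections to $\widehat\Sigma_n^{-1/2}$ and to $\widehat\Sigma_{B_n}^{1/2}$ come out of the mutually compatible orders $O\bigl(\rho(S_n,K_n)/\sqrt{\mathbb V(K_n)}\bigr)$ and $O\bigl(\rho(S_{B_n},K_{B_n})\sqrt{\mathbb V(S_{B_n})}\bigr)$, so that the cross terms in $M_n^{(1)}=\widehat\Sigma_n^{-1/2}\widehat\Sigma_{B_n}^{1/2}$ are only $O(\rho(S_n,K_n))=o(1)$. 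Upgrading to $L_3$-convergence then requires one to split off the rare event that $B_n\wedge(n-B_n)$ is small --- on which $\|\widehat\Sigma_{B_n}^{1/2}\|_{\mathrm{op}}$ is bounded crudely while $\|\widehat\Sigma_n^{-1/2}\|_{\mathrm{op}}=O(1/\sqrt n)$ is negligible --- and then use a uniform-integrability argument built on the explicit error estimates of \cite{FuHwZa}.

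\emph{The toll vector $b_n^{(i)}$.}
Writing $\mu(m)=m\mathscr F[\cdot](m)+(\text{lower order})$ and $\nu(m)=h^{-1}m\log m+m\mathscr F[\cdot](m)+(\text{lower order})$ and Taylor-expanding around $pn$ and $qn$: for the first toll component the $0$th-order part of $\mu(B_n)+\mu(n-B_n)-\mu(n)$ cancels because $p+q=1$, and so --- crucially --- does the $1$st-order part, since $\mu'(pn)-\mu'(qn)=\bigl(\mathscr F[\cdot](pn)+pn\mathscr F[\cdot]'(pn)\bigr)-\bigl(\mathscr F[\cdot](qn)+qn\mathscr F[\cdot]'(qn)\bigr)=0$ by the self-similarity identities; the remainder being $O(1)$, we get $1-\mu(n)+\mu(B_n)+\mu(n-B_n)=O(1)=o\bigl(\sqrt{\mathbb V(S_n)}\bigr)$. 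For the second component the $0$th-order part of $\nu(B_n)+\nu(n-B_n)-\nu(n)$ equals $-n$ by the entropy identity $p\log p+q\log q=-h$ --- this is what exactly cancels the toll $n$ --- while the surviving $1$st-order part, $h^{-1}\log\tfrac pq\,(B_n-pn)$, is $O(\sqrt n)$; hence $n-\nu(n)+\nu(B_n)+\nu(n-B_n)=O(\sqrt n)=o\bigl(\sqrt{n\log n}\bigr)=o\bigl(\sqrt{\mathbb V(K_n)}\bigr)$. Premultiplying by $\widehat\Sigma_n^{-1/2}$, whose action in the two directions is of orders $\Theta(1/\sqrt n)$ and $\Theta(1/\sqrt{n\log n})$, we obtain $b_n^{(i)}\stackrel{L_3}{\longrightarrow}0$. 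Carrying these remainder estimates out in $L_3$, rather than only in probability, is the principal technical obstacle of the proposition --- this is where the precise error bounds of \cite{FuHwZa} and the finiteness of all moments of $B_n$ are used.

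\emph{The remaining conditions and the fixed point.}
The contraction inequality in \eqref{cond-2} is $\mathbb E\bigl(\|M_1\|_{\mathrm{op}}^3+\|M_2\|_{\mathrm{op}}^3\bigr)=p^{3/2}+q^{3/2}<1$, which holds for every $p\in(0,1)$, and the negligibility condition is routine: for $B_n^{(i)}\le j$ the matrix $\widehat\Sigma_{B_n^{(i)}}$ takes finitely many bounded values while $\|\widehat\Sigma_n^{-1/2}\|_{\mathrm{op}}\to0$, and on $\{B_n=n\}$, of probability $p^n\to0$, $\|M_n^{(i)}\|_{\mathrm{op}}=1$. So Theorem~3.1 of \cite{NeRu2} applies, giving convergence to the unique fixed point --- in the class of centred laws with covariance $I_2$ and finite third absolute moment --- of $\binom{X_1}{X_2}\stackrel{d}{=}\sqrt p\binom{X_1}{X_2}+\sqrt q\binom{X_1^*}{X_2^*}$ with $(X_1^*,X_2^*)$ an independent copy. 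Each marginal then satisfies $X\stackrel{d}{=}\sqrt p\,X+\sqrt q\,X^*$ with $(\sqrt p)^2+(\sqrt q)^2=1$ and finite variance, forcing $X\sim\mathcal N(0,1)$ by the classical L\'evy stability characterization; and the joint fixed point is unique because the associated map is a strict contraction on the Zolotarev metric $\zeta_3$ with factor $p^{3/2}+q^{3/2}<1$. Hence the fixed point is $\mathcal N_2(0,I_2)$, which, in view of $\widehat\Sigma_n\sim\Sigma_n$, also proves Theorem~\ref{main-thm-2}(i).
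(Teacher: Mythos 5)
Your proposal is correct and follows the same overall route as the paper: apply the multivariate contraction theorem (Theorem 3.1 of \cite{NeRu2}) with the limit matrices $M_1=\sqrt{p}\,I_2$, $M_2=\sqrt{q}\,I_2$, verify \eqref{cond-1}--\eqref{cond-2}, and identify the fixed point as $\mathcal N_2(0,I_2)$. The differences are in how the conditions are checked. For the toll vector, the paper does not redo the expansion: it imports the $L_3$-statements $\bigl(1-\mu(n)+\mu(B_n)+\mu(n-B_n)\bigr)/\sqrt n\to0$ and $\bigl(n-\nu(n)+\nu(B_n)+\nu(n-B_n)\bigr)/\sqrt{n\log n}\to0$ from the univariate central limit theorems (contraction-method proofs as in \cite{FuLe2}), and then only bounds the entries of $\widehat\Sigma_n^{-1/2}$ via \eqref{A-half}; you instead derive these estimates directly from the self-similarity $\mathscr F[g](pz)=\mathscr F[g](qz)=\mathscr F[g](z)$ (your verification $p^{\chi_k}=q^{\chi_k}=1$ is correct) together with the entropy identity, which is more self-contained and makes transparent both why the toll $n$ is exactly absorbed and why the surviving fluctuation is only $O(\sqrt n)$ --- at the price of having to control the error terms of the asymptotic expansions of $\mu,\nu$ in $L_3$ yourself, which the paper's citation sidesteps. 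For $M_n^{(i)}$ the paper computes the entries from \eqref{A-half}, uses the strong law plus Taylor expansion of the periodic functions, and concludes $L_3$-convergence by dominated convergence; your perturbative treatment of the unbalanced orders ($\mathbb V(K_n)\asymp n\log n$ versus $n$), with the rare-event splitting and uniform integrability, is the same idea with the bookkeeping made explicit (and your observation that the cross terms are $O(\rho(S_n,K_n))=O(1/\sqrt{\log n})$ is consistent with the orders coming out of \eqref{A-half}). Finally, where the paper declares the Gaussian fixed point ``obvious,'' you justify uniqueness via the $\zeta_3$-contraction factor $p^{3/2}+q^{3/2}<1$ and the stability characterization of the marginals; this is a harmless (and slightly more careful) elaboration of the same step.
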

\begin{proof}
We only check (\ref{cond-1}) because the second condition of (\ref{cond-2}) follows along similar lines and the first condition of (\ref{cond-2}) follows from \eqref{cond-1} in view of
\[
	\|M_1\|_{\text{op}}=\sqrt{p}\qquad\text{and}
	\qquad \|M_2\|_{\text{op}}=\sqrt{q}.
\]

We start with proving the claimed property for $b_n^{(i)}$ for which
we use the notations
\[
	\Omega_1(n)=\mathbb{V}(S_n),
	\quad\Omega_2(n)={\rm Cov}(S_n,K_n),
	\quad\Omega_3(n)=\mathbb{V}(K_n)
\]
and
\[
    D(n)=\Omega_1(n)\Omega_3(n)-\Omega_2(n)^2.
\]
Also define
\[
    R(n) = \Omega_1(n)+\Omega_3(n)+2\sqrt{D(n)}.
\]

Then, by \eqref{A-half}, we see that
\begin{align*}
	b_n^{(1)}&=(1-\mu(n)+\mu(B_n)
	+\mu(n-B_n))\frac{\Omega_3(n)
	+\sqrt{D(n)}}{\sqrt{D(n)R(n)}}\\
	&\quad-(n-\nu(n)+\nu(B_n)
	+\nu(n-B_n))\frac{\Omega_2(n)}{\sqrt{D(n)R(n)}}
\end{align*}
and a similar expression for $b_n^{(2)}$ holds. From the normality of
both $S_n$ and $K_n$ (proved for $S_n$ via the contraction method in
\cite{FuLe2} and a similar method of proof also applies to $K_n$), we
have
\[
	\frac{1-\mu(n)+\mu(B_n)+\mu(n-B_n)}{\sqrt{n}}
	\stackrel{L_3}{\longrightarrow} 0 \qquad\text{and}\qquad
	\frac{n-\nu(n)+\nu(B_n)+\nu(n-B_n)}{\sqrt{n\log n}}
	\stackrel{L_3}{\longrightarrow} 0.
\]
Moreover, we have
\[
	\sqrt{n}\,\frac{\Omega_3(n)+\sqrt{D(n)}}{\sqrt{D(n)R(n)}}
	\sim\frac{1}
	{\sqrt{\mathscr{F}[g^{(1)}](n)}},
\]
and
\[
	\sqrt{n\log n}\,\frac{\Omega_2(n)}
	{\sqrt{D(n)R(n)}}
	\sim\frac{\mathscr{F}[g^{(2)}](n)}
	{\lambda\sqrt{\log n\mathscr{F}[g^{(1)}](n)}},
\]
where $g^{(1)}, g^{(2)}$ and $\lambda$ are as above. Thus, both
sequences are bounded and, consequently, we obtain the claimed result
with $L_3$-convergence above. Similarly, one proves the claimed
result for $b_n^{(2)}$.

Next, we consider $M_n^{(i)}$. Here, we only show the claim for the
$(1,1)$ entry of $M_n^{(1)}$ (denoted by $M_n^{(1)}(1,1)$) all other
cases being treated similarly. First, observe that by definition and
matrix square-root, we have
\begin{align*}
	M_n^{(1)}(1,1)
	=\frac{\sqrt{R(n)}}{\sqrt{R(B_n)}}
	\cdot\frac{(\Omega_3(n)
	+\sqrt{D(n)})(\Omega_1(B_n)+\sqrt{D(B_n)})
	-\Omega_2(n)\Omega_2(B_n)}{\sqrt{D(n)R(n)}}.
\end{align*}
Now, from the strong law of large numbers for the binomial
distribution
\[
    \frac{B_n}{n}\stackrel{\text{a.s.}}{\longrightarrow} p
\]
and from Taylor series expansion (note that all periodic functions are
infinitely differentiable), we have
\[
	\frac{\sqrt{R(n)}}{\sqrt{R(B_n)}}
	\stackrel{\text{a.s.}}{\longrightarrow}\frac{1}{\sqrt{p}},
\]
and
\[
	\frac{(\Omega_3(n)
	+\sqrt{D(n)})(\Omega_1(B_n)+\sqrt{D(B_n)})
	-\Omega_2(n)\Omega_2(B_n)}{\sqrt{D(n)R(n)}}
	\stackrel{\text{a.s.}}{\longrightarrow}p.
\]
Thus, $M_n^{(1)}(1,1)\stackrel{\text{a.s.}}
{\longrightarrow} \sqrt{p}$ from which the claim follows by the
dominated convergence theorem.
\end{proof}

Next, set
\[
    \widetilde{\Sigma}_n
	:=\begin{pmatrix}
	    n\mathscr{F}[g^{(1)}](n) & 0 \\
		0 & \lambda n\log n
	\end{pmatrix}.
\]
Then, we have the following simple lemma.
\begin{lmm}
We have, as $n\rightarrow\infty$,
\[
    \widehat{\Sigma}_n^{-\frac12}\widetilde{\Sigma}_n^{\frac12}
	\rightarrow I_2.
\]
\end{lmm}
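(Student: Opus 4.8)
The plan is to make the single block $\widehat{\Sigma}_n^{-\frac12}$ explicit via \eqref{A-half}, multiply on the right by the diagonal matrix $\widetilde{\Sigma}_n^{\frac12}$, and verify the resulting four scalar limits. Keeping the notation $\Omega_1(n)=\mathbb{V}(S_n)$, $\Omega_2(n)=\mathrm{Cov}(S_n,K_n)$, $\Omega_3(n)=\mathbb{V}(K_n)$, $D(n)=\Omega_1(n)\Omega_3(n)-\Omega_2(n)^2$ and $R(n)=\Omega_1(n)+\Omega_3(n)+2\sqrt{D(n)}$ from the proof of Proposition~\ref{asym-clt}, formula \eqref{A-half} gives
\[
	\widehat{\Sigma}_n^{-\frac12}\widetilde{\Sigma}_n^{\frac12}
	=\frac{1}{\sqrt{D(n)R(n)}}
	\begin{pmatrix}
		(\Omega_3(n)+\sqrt{D(n)})\sqrt{n\mathscr{F}[g^{(1)}](n)}
		& -\Omega_2(n)\sqrt{\lambda n\log n}\\[.4em]
		-\Omega_2(n)\sqrt{n\mathscr{F}[g^{(1)}](n)}
		& (\Omega_1(n)+\sqrt{D(n)})\sqrt{\lambda n\log n}
	\end{pmatrix},
\]
so the lemma amounts to showing that the four entries tend to $1$, $0$, $0$, $1$ respectively.

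The next step is to collect the orders of magnitude available in the asymmetric regime $p\ne\frac12$, where $\lambda>0$. From Table~\ref{tb-SKN} and Proposition~\ref{fc-cov} one has $\Omega_1(n)\sim n\mathscr{F}[g^{(1)}](n)=\Theta(n)$, $\Omega_3(n)\sim\lambda n\log n=\Theta(n\log n)$ and $\Omega_2(n)=O(n)$, using also that $\mathscr{F}[g^{(1)}](n)$ is bounded and bounded away from $0$ (Schachinger~\cite{Sch}). Consequently $\Omega_2(n)^2=O(n^2)=o\bigl(\Omega_1(n)\Omega_3(n)\bigr)$, whence $D(n)\sim\Omega_1(n)\Omega_3(n)\sim\lambda\mathscr{F}[g^{(1)}](n)\,n^2\log n$ and $\sqrt{D(n)}=\Theta(n\sqrt{\log n})$; since $\Omega_3(n)$ dominates both $\Omega_1(n)$ and $\sqrt{D(n)}$, also $R(n)\sim\Omega_3(n)\sim\lambda n\log n$, so that $\sqrt{D(n)R(n)}\sim\lambda\sqrt{\mathscr{F}[g^{(1)}](n)}\,n^{3/2}\log n$.

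With these orders the entrywise check is routine. In the $(1,1)$ entry, $\sqrt{D(n)}=o(\Omega_3(n))$ makes the numerator $\sim\lambda n\log n\cdot\sqrt{n\mathscr{F}[g^{(1)}](n)}\sim\sqrt{D(n)R(n)}$; in the $(2,2)$ entry, $\Omega_1(n)=o(\sqrt{D(n)})$ makes the numerator $\sim\sqrt{D(n)}\cdot\sqrt{\lambda n\log n}\sim\sqrt{D(n)R(n)}$; so both diagonal entries tend to $1$. The off-diagonal numerators are $O(n^{3/2}\sqrt{\log n})$ and $O(n^{3/2})$ against a denominator of order $n^{3/2}\log n$, hence are $O\bigl((\log n)^{-1/2}\bigr)$ and $O\bigl((\log n)^{-1}\bigr)$ and tend to $0$. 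This establishes $\widehat{\Sigma}_n^{-\frac12}\widetilde{\Sigma}_n^{\frac12}\to I_2$.

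There is no serious obstacle here; the only point requiring attention is the bookkeeping forced by the three different orders --- $\mathbb{V}(S_n)$ and $\mathrm{Cov}(S_n,K_n)$ linear, $\mathbb{V}(K_n)$ of order $n\log n$ --- which makes $\widehat{\Sigma}_n$ strongly unbalanced: one must track which summand dominates in $R(n)$ and in each numerator, and keep $\mathscr{F}[g^{(1)}](n)$ uniformly away from $0$ and $\infty$ so that the $\Theta$- and $o$-estimates are legitimate. This is precisely where the hypothesis $p\ne\frac12$ is used, since at $p=\frac12$ one has $\lambda=0$ and $\mathbb{V}(K_n)$ is again only linear, which is the reason part~(ii) of Theorem~\ref{main-thm-2} is phrased with $\Sigma_n$ rather than $\widetilde{\Sigma}_n$. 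One can sanity-check the statement by noting first that $\widetilde{\Sigma}_n^{-\frac12}\widehat{\Sigma}_n\widetilde{\Sigma}_n^{-\frac12}\to I_2$ entrywise; but since the conditioning of $\widetilde{\Sigma}_n$ grows, passing from this to the asserted convergence still uses the specific orders above, so I would present the direct computation.
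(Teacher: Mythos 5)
Your computation is correct and is exactly the ``straightforward computation using the expressions of the matrix square-root and its inverse'' that the paper invokes in one line: you write out $\widehat{\Sigma}_n^{-\frac12}\widetilde{\Sigma}_n^{\frac12}$ via \eqref{A-half} and check the four entries using $\mathbb{V}(S_n)=\Theta(n)$, $\mathrm{Cov}(S_n,K_n)=O(n)$, $\mathbb{V}(K_n)\sim\lambda n\log n$ and the fact that $\mathscr{F}[g^{(1)}](n)$ stays away from $0$ and $\infty$. So the proposal matches the paper's approach, merely spelled out in full detail, and is correct.
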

\begin{proof} This follows by a straightforward computation using the
expressions of the matrix square-root and its inverse from above.
\end{proof}



From this lemma and Proposition \ref{asym-clt} our claimed result now
follows.

\section*{Acknowledgments}

We thank the referees for helpful comments; in particular, the
detailed comments by one referee on the proof of Theorem
\ref{main-thm-2} led to considerable improvements in Section \ref{ll}.


\begin{thebibliography}{99}

\bibitem{ChFuHwNe} H.-H. Chern, M. Fuchs, H.-K. Hwang and R.
Neininger. Dependence and phase changes in random $m$-ary search
trees, \emph{Random Struct. Algor.}, to appear.

\bibitem{De98} L. Devroye (1999). Universal limit laws for depths in
random trees, \emph{SIAM J. Comput.}, \textbf{28}, 409--432.

\bibitem{FlGoDu} P. Flajolet, X. Gourdon and P. Dumas (1995). Mellin
transforms and asymptotics: harmonic sums, \emph{Theoret. Comput.
Sci.}, \textbf{144}, 3--58.

\bibitem{FuHwZa} M. Fuchs, H.-K. Hwang and V. Zacharovas (2014). An
analytic approach to the asymptotic variance of trie statistics and
related structures, \emph{Theor. Comput. Sci.}, \textbf{527}, 1--36.

\bibitem{FuLe2} M. Fuchs and C.-K. Lee (2014). A general central
limit theorem for shape parameters of $m$-ary tries and PATRICIA
tries, \emph{Electron. J. Combin}., \textbf{21:1}, 26 pages.

\bibitem{FuLe} M. Fuchs and C.-K. Lee (2015). The Wiener index of
random digital trees, \emph{SIAM J. Discrete Math.}, \textbf{29},
586--614.

\bibitem{HwFuZa} H.-K. Hwang, M. Fuchs and V. Zacharovas (2010).
Asymptotic variance of random symmetric digital search trees,
\emph{Discrete Math. Theor. Comput. Sci.}, \textbf{12}, 103--166.

\bibitem{JaRe} P. Jacquet and M. R\'{e}gnier (1986). Trie
partitioning process: limiting distributions. In \emph{CAAP ’86}
(Nice, 1986), vol. \textbf{214} of \emph{Lecture Notes in Comput.
Sci.}, Springer, Berlin, 196--210.

\bibitem{JaSz} P. Jacquet and W. Szpankowski (1998). Analytical
de-Poissonization and its applications, \emph{Theoret. Comput. Sci.},
\textbf{201}, 1--62.

\bibitem{KiPr} P. Kirschenhofer and H. Prodinger (1991). On some
applications of formulae of Ramanujan in the analysis of algorithms,
\emph{Mathematika}, \textbf{38}, 14--33.

\bibitem{KiPrSz} P. Kirschenhofer, H. Prodinger and W. Szpankowski
(1989). On the variance of the external path length in a symmetric
digital trie, \emph{Discrete Appl. Math.}, \textbf{25}, 129--143.

\bibitem{NeRu} R. Neininger and L. R\"{u}schendorf (2004). A general
limit theorem for recursive algorithms and combinatorial structures,
\emph{Ann. Appl. Probab.}, \textbf{14:1}, 378--418.

\bibitem{NeRu2} R. Neininger and L. R\"{u}schendorf (2006). A survey
of multivariate aspects of the contraction method, \emph{Discrete
Math. Theor. Comput. Sci.}, \textbf{8}, 31--56.

\bibitem{ReJa} M. R\'{e}gnier and P. Jacquet (1989). New results on
the size of tries, \emph{IEEE Trans. Inform. Theory}, \textbf{35},
203--205.

\bibitem{Sch} W. Schachinger (1995). On the variance of a class of
inductive valuations of data structures for digital search,
\emph{Theoret. Comput. Sci.}, \textbf{144}, 251--275.

\bibitem{To90} Y. L. Tong, \emph{The Multivariate Normal
Distribution}. Springer-Verlag, New York, 1990.

\end{thebibliography}
\end{document}